\newcommand{\mc}[1]{\mathcal{#1}}
\newcommand{\mf}[1]{\mathfrak{#1}}
\newcommand{\R}{\mathbb{R}}
\newcommand{\Z}{\mathbb{Z}}
\newcommand{\ga}{\alpha}
\newcommand{\gb}{\beta}
\newcommand{\ord} [1]{| #1 |}
\newcommand{\h}{\mf h}
\newcommand{\eb}[1]{\textbf{\emph{{#1}}}}
\newcommand{\ed}[1]{\stackrel{#1}{\longrightarrow}}
\newcommand{\HG}{H^{*}_{T}(\h_\gamma)}
\newtheorem{theorem}{Theorem}[section]
\newtheorem{corollary}[theorem]{Corollary}
\newtheorem{lemma}[theorem]{Lemma}
\newtheorem{proposition}[theorem]{Proposition}
\newtheorem{definition}[theorem]{Definition}
\newtheorem{example}[theorem]{Example}
\title{Divided Difference Operator for  the Highest root Hessenberg variety}
\author{Nicholas Teff}
\address{Department of Mathematics, University of Iowa, 14 MacLean Hall, Iowa City, IA 52242-1419}
\email{nicholas-teff@uiowa.edu}
\thanks{Partially supported by the University of Iowa Department of  Mathematics NSF VIGRE grant DMS-0602242}
\begin{document}
\maketitle
\begin{abstract}
We construct a divided difference operator using GKM theory.  This generalizes the classical divided difference operator for the cohomology of the complete flag variety.  This construction proves a special case of a recent conjecture of  Shareshian and Wachs.  Our methods are entirely combinatorial and algebraic, and rely heavily on the combinatorics of root systems and Bruhat order.
\end{abstract}

\section{Intoduction}

This article is an extended abstract of the article \cite{Te-HRDD} of the same title.  Most of the details of the proofs are omitted.

A classical problem of Schubert calculus is to define explicit classes $\mc S^{[w]}$ to represent Schubert varieties in cohomology rings of a partial flag variety.  For geometric reasons these classes form an additive basis for the cohomology.    In equivariant cohomology this problem reduces to finding the polynomials $\mc S^{[w]}([v])$ which are nonzero only if $[v] \ge [w]$ in Bruhat order.  For more general spaces the uniqueness or even existence of generalized Schubert classes named \emph{flow-up classes} is not guaranteed.  When they exist it is natural to ask for some combinatorial formula defining the polynomials.  This is the type of question we adress here.

A motivating example for our work is the complete flag variety $G/B$.  By a combinatorial construction called \emph{GKM theory} (named after Goresky, Kottwitz and MacPherson) the equivariant cohomology is computed directly from the \emph{Bruhat graph}  $\Gamma_{W}$ of the Weyl group $W$ (for definitions see Section \ref{hgraph}) \cite{GKM, Ty-RS}. The Schubert classes classes are constructed by divided difference operators 
\[\partial_{i}: \mc S^{w}(u) \longmapsto \frac{\mc S^{w}(u) - s_{i}\mc S^{w}(s_{i}u)}{\ga_{i}}.\]   

These operators were first introduced by Bernstein, Gelfand, and Gelfand; and Demazure for ordinary cohomology, and Kostant and Kumar generalized them to equivariant cohomology \cite{BGG-DGP,De-DO,KoKu-HR}.  More recently, employing GKM theory Tymoczko uses a left action of $W$ and defines new divided difference operators \cite{Ty-RS}.  Flow-up classes for $G/B$ are unique, so this construction agrees with the earlier work.   

A benefit of divided difference operators is that they are recursive maps.  This means if $\mc S^w$ is known and $s_iw < w$, then $\mc S^{s_iw} := \partial_{i}\mc S^{w}$.  Billey uses this recursion of the Kostant and Kumar operators to define a closed combinatorial formula for the polynomial $\mc S^{w}(v)$ \cite{Bi-BF}.  Billey's formula is a positive formula involving the reduced expressions of $w$ obtained as a subexpression of a fixed reduced expression for $v$ \cite[Theorem 3]{Bi-BF}.  

In this paper \emph{GKM rings} (a combinatorial analog of equivariant cohomology) are defined for certain subgraphs of the Bruhat graph.  As with the Bruhat graph these rings construct the equivariant cohomology of algebraic varieties called the \emph{regular semisimple Hessenberg varieties}.  Two important examples of regular semisimple Hessenberg varieties are the complete flag variety $G/B$ and the toric variety associated to the Coxeter complex \cite{dMPS-HV}.

Hessenberg varieties were first arose in numerical analysis in the context of calculating the Hessenberg form of a matrix, and have received recent attention in the work of Tymoczko generalizing Springer theory to nilpotent Hessenberg varieties \cite{Sp-SRGF, Ty-PH}.  The cohomology ring of regular semisimple Hessenberg varieties carry a representation of $W$, of which little is known.  In fact, it remains an open question when $W \cong \mf S_{n}$ the symmetric group.  In this case your author has provided an irreducible decomposition of this representation for a large family called \emph{parabolic Hessenberg varieties} \cite{Te-HRY}.  

In another direction, the representation for $\mf S_{n}$ has appeared in a recent conjecture of Shareshian and Wachs in their work on \emph{chromatic quasisymmetric functions} \cite[Conjecture 5.3]{ShWa-CQH}.  They conjecture that the under the Frobenius isomorphism between the representation ring of $\mf S_{n}$ and the ring of symmetric functions that the image of the ordinary cohomology ring is the chromatic symmetric function they study.

Our main result (Theorem \ref{BIG}) generalizes the divided difference operator for $G/B$ to what we call the \emph{highest root Hessenberg variety}.  This result is a model first step toward defining bases which would allow us to investigate the representation on the cohomology (ordinary and equivariant).  With this basis in hand we end this paper by announcing that for the highest root Hessenberg variety the Shareshian and Wachs conjecture is true (Theorem \ref{confirm}).  

Our problem originates in algebraic geometry, but our methods are combinatorial and algebraic, a primary advantage of GKM theory.  We will see the construction of divided difference operators and the flow-up classes relies heavily on Bruhat order and root systems.  In this abstract to emphasize the combinatorial nature of this construction we have left out the formal definitions of Hessenberg varieties and GKM theory.  The curious reader is directed to \cite{dMPS-HV, Te-HRY, Ty-PH} for Hessenberg varieties and to \cite{GKM, GoTo-GS, GuZa-E, Ty-RS} for GKM theory.

\section{Hessenberg graphs}
\label{hgraph}

We begin with the definition of a \emph{Weyl group $W$} \cite{Hu-RG}.  Let $V$ be a $k$-dimensional real vector space with a symmetric positive definite bilinear form $(\;,\;)$.  A \emph{reflection} in $V$ is a linear map which negates a non-zero vector $\ga \in V$ and fixes point-wise the hyperplane orthogonal to $\ga$.  A formula for the reflection through $\ga$ is $s_{\ga}(v) = v - {2(\ga,v)}{(\ga,\ga)}^{-1}\ga$.
\pagebreak

A \eb{(crystalographic) root system} in $V$ is a finite set of vectors $\Phi$ (called \emph{roots}) which satisfy the following axioms
\begin{itemize}
\item[]{(1.)} $\R\ga \cap \Phi = \pm \ga$ for all $\ga \in \Phi$;
\item[]{(2.)} $s_{\ga}\Phi = \Phi$ for all $\ga \in \Phi$;
\item[]{(3.)} $\frac{2(\ga,\gb)}{(\ga,\ga)} \in \Z$ for all $\ga, \gb \in \Phi$. 
\end{itemize}

The integer $c_{\ga \gb} := \frac{2(\ga,\gb)}{(\ga,\ga)}$ is called a \emph{Cartan integer}.  A \emph{base} $\Delta \subset \Phi$ is a basis of $V$ such that for each $\ga \in \Phi$ the coefficients of the expansion $\ga = \sum_{\Delta}c_{i}\ga_{i}$ are either all non-negative or all non-positive.

With a fixed base $\Delta$ the \emph{positve roots} $\Phi^+$ are those with all non-negative coefficients and respectively call $\Phi^{-} = - \Phi^{+}$ the negative roots.  There is a partial order ($\prec$) on $\Phi$ where $\ga \prec \gb$ means $\gb - \ga$ is a sum of positive roots.  We say $\mc I \subset \Phi$ is an \eb{ideal} if whenever $\gb \in \mc I$ and $\gb \in \Phi$ with $\gb \prec \ga$, then $\ga \in \mc I$. 

The Weyl group $W$ is the group generated by the \emph{simple reflections} $s_{i} := s_{\ga_{i}}$ for $\ga_{i} \in \Delta$.  For $w \in W$ the \emph{length $\ell(w)$} is the length of a reduced expression $w = s_{i_{1}}s_{i_{2}}\cdots s_{i_{j}}$.  Finally, the \eb{Bruhat graph $\Gamma_{W}$} has vertices $W$ and edges $u \ed{} w$ if $w = s_{\ga}u$ for $\ga \in \Phi^+$ and $w^{-1}\ga \in \Phi^-$ (or equivalently $\ell(w) > \ell(u)$), and the \eb{Bruhat order $<$} is the transitive closure of the edge relations.

\begin{example}  [The type $A_{n}$ root system.]  Consider $\R^{n+1}$ with dot product defined on the standard coordinate basis $\mathtt{t_{i}}$ for $i = 1, \cdots, n+1$.  Let $V$ be the span of the roots $\Phi = \{\mathtt{t_{i}} - \mathtt{t_{j}} : i \ne j\}$.  The simple roots are the $\mathtt{t_{i}} - \mathtt{t_{i+1}}$ and the positive roots are the $\mathtt{t_{i}} - \mathtt{t_{j}}$ for $i < j$.  The reflection in $\mathtt{t_{i}} - \mathtt{t_{j}}$, denoted $s_{(ij)}$, interchanges $\mathtt{t_{i}}$ and $\mathtt{t_{j}}$ and fixes the other $\mathtt{t_{k}}$.  Hence, mapping this reflection to the transposition $(ij)$ defines an isomporism of the Weyl group with $\mf{S}_{n+1}$.  
\end{example} 

\begin{center}
\begin{figure}[ht]
\includegraphics[width = 145mm]{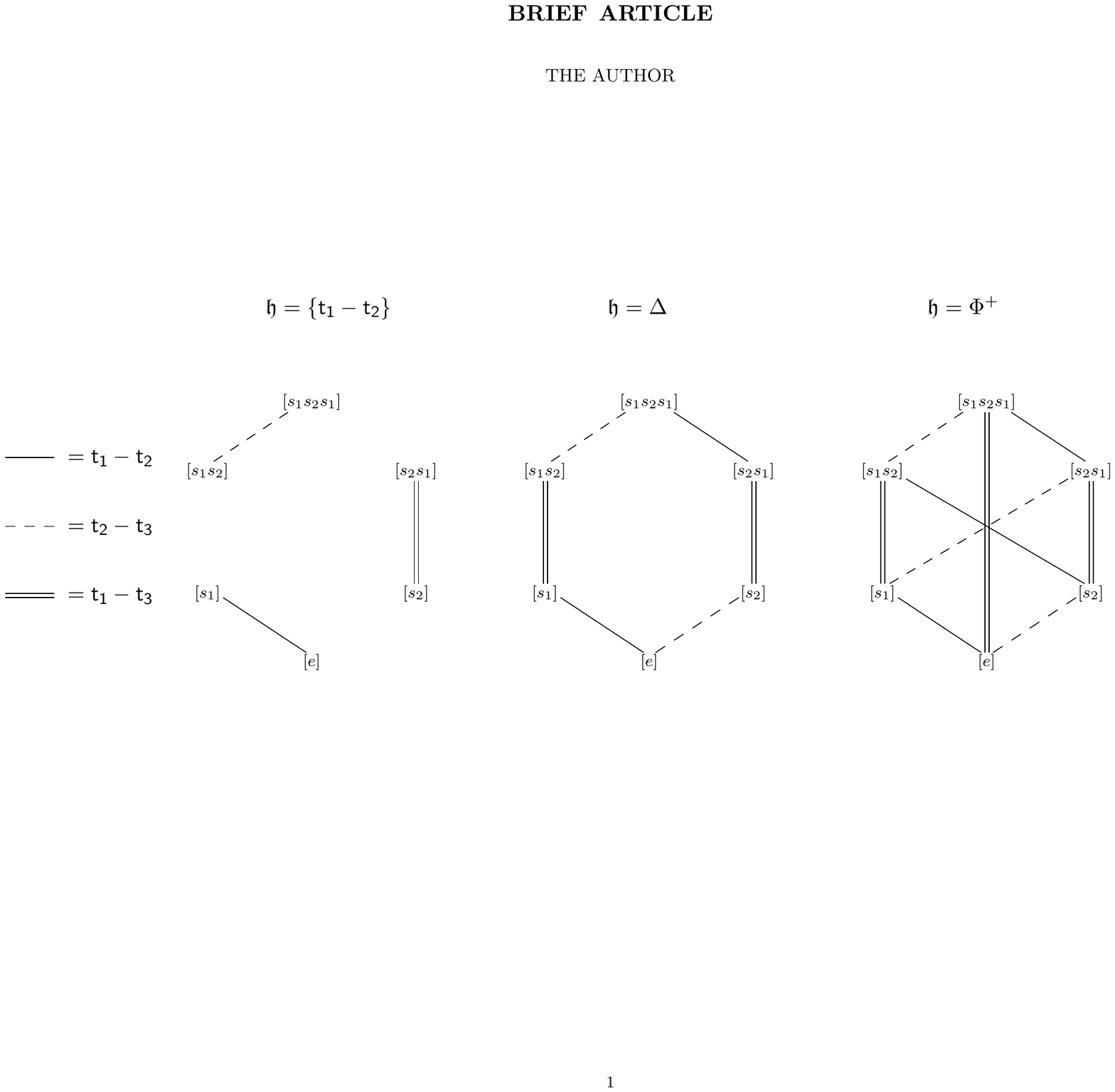}
\caption{Hessenberg graphs in type $A_{2}$}\label{mgraphs}
\end{figure}
\end{center}

\begin{definition}\label{hdef}  Let $(V, \Phi,\Delta, W)$ be as defined above.  A \eb{Hessenberg set $\h$} is the complement of an ideal of $\mc I_\h \subset \Phi^{+}$.  The \eb{Hessenberg graph $\Gamma_\h$} has vertices $W$ and edges $u \ed{} w$ if $w = s_\ga u$ for $\ga \in \Phi^+$ and $w^{-1}\ga \in -\h$.  The \eb{GKM ring of $\h$} is the subring of $\textrm{Maps}(W, \; \R[\ga_{i}, \cdots, \ga_{k}])$ defined from $\Gamma_{\h}$ by

\[ H_T^{*}(\h) = \left \{ \mc P: W \longrightarrow \R[\ga_1,\cdots, \ga_k] : \begin{array}{c} \textup{for each edge} \; w \ed{} s_{\ga}w \\ \mc P(w) - \mc P(s_{\ga}w) \in \langle \ga \rangle \end{array}\right\}.\] 
\end{definition}

The relations $\mc P(w) - \mc P(s_{\ga}w) \in \langle \ga \rangle$ are the \emph{GKM conditions}.  The GKM ring is a graded ring; we say $\mc P \in H^{k}_{T}(\h)$ if each non-zero polynomial $\mc P(w)$ is homogeneous of degree $k$.  Elements of $H_T^{*}(\h)$ are represented by labeling the vertices of $\Gamma_{\h}$ by polynomials (cf Figure \ref{dotex}). 

The GKM rings carry an action of $W$ obtained by first extending the action of $W$ on $\Phi$ to the polynomial ring $\R[\Delta] := \R[\ga_{1},\cdots,\ga_{k]}$ from which we obtain an action on $\textrm{Maps}(W, \; \R[\Delta])$ by the rule
\begin{equation}\label{dot}
(w \cdot \mc P)(u) = w\mc P(w^{-1}u)
\end{equation}
where on the right $w$ is the acting on the \emph{polynomial} $\mc P(w^{-1}u) \in \R[\Delta]$.
 
\begin{proposition}  The GKM ring $H^{*}_{T}(\h)$ is $W$-stable with respect to the action defined in Equation \ref{dot}.
\end{proposition}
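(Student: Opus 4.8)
The plan is to check directly that the action of Equation \ref{dot} preserves the defining GKM conditions; once closure is established, $H^{*}_{T}(\h)$ is automatically a $W$-stable subring, because each $w\in W$ acts on $\R[\Delta]$ by a graded ring automorphism and the ring structure on $\textup{Maps}(W,\R[\Delta])$ is pointwise. So fix $\mc P\in H^{*}_{T}(\h)$ and $w\in W$. Since $w$ preserves degree on $\R[\Delta]$, the map $w\cdot\mc P$ again takes homogeneous values of the same degrees, and the only thing to verify is the edge condition for $w\cdot\mc P$.

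Let $v\ed{}s_\gb v$ be an edge of $\Gamma_\h$, so $\gb\in\Phi^{+}$ and $v^{-1}\gb\in-\h$. Put $u=w^{-1}v$ and $\delta=w^{-1}\gb\in\Phi$. The key algebraic identity is the conjugation formula $w^{-1}s_\gb w=s_{\delta}$, which follows from $s_\gb(x)=x-2(\gb,x)(\gb,\gb)^{-1}\gb$ together with the fact that $w$ preserves the bilinear form. Hence $w^{-1}s_\gb v=s_\delta u$, and unwinding Equation \ref{dot} gives
\[ (w\cdot\mc P)(v)-(w\cdot\mc P)(s_\gb v)= w\big(\mc P(u)\big)-w\big(\mc P(s_\delta u)\big)=w\big(\mc P(u)-\mc P(s_\delta u)\big). \]
Since $w$ is a ring automorphism of $\R[\Delta]$ carrying $\delta$ to $\gb$ up to sign, it carries the ideal $\langle\delta\rangle$ onto $\langle\gb\rangle$; thus it suffices to prove $\mc P(u)-\mc P(s_\delta u)\in\langle\delta\rangle$.

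To establish this I would split on the sign of $\delta$. If $\delta\in\Phi^{+}$, then $u^{-1}\delta=v^{-1}\gb\in-\h$, so $u\ed{}s_\delta u$ is an edge of $\Gamma_\h$ and the GKM condition for $\mc P$ gives exactly $\mc P(u)-\mc P(s_\delta u)\in\langle\delta\rangle$. If instead $\delta\in\Phi^{-}$, write $\gga=-\delta\in\Phi^{+}$, so that $s_\delta=s_\gga$ and $u^{-1}\gga=-v^{-1}\gb\in\h$; then $(s_\gga u)^{-1}\gga=-u^{-1}\gga\in-\h$, which says that $s_\gga u\ed{}u$ is an edge of $\Gamma_\h$, and the GKM condition again yields $\mc P(u)-\mc P(s_\gga u)\in\langle\gga\rangle=\langle\delta\rangle$. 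In both cases we obtain the desired membership, completing the verification.

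The one genuine subtlety — and the point I would flag for a reader — is that $-\h$ is not symmetric under $\ga\mapsto-\ga$, so an \emph{oriented} edge of $\Gamma_\h$ need not pull back through $w$ to an oriented edge: the root $w^{-1}\gb$ can be negative. The resolution is simply that the GKM condition $\mc P(x)-\mc P(s_\gga x)\in\langle\gga\rangle$ is insensitive to orientation and to the sign of $\gga$, so it is enough that $u$ and $s_\delta u$ be joined by an edge of $\Gamma_\h$ in \emph{either} direction, which is precisely what the two-case analysis supplies. Everything else is routine: conjugating a reflection by $w$, and checking that the graded ring automorphism $w$ sends $\langle\delta\rangle$ to $\langle\gb\rangle$.
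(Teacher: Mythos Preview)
Your proof is correct and follows essentially the same strategy as the paper's: both arguments reduce to the observation that the undirected edge $\{v,s_\gb v\}$ in $\Gamma_\h$ pulls back under $w^{-1}$ to an undirected edge $\{u,s_\delta u\}$, and that the GKM condition is insensitive to orientation; your two-case split on the sign of $\delta$ simply spells out what the paper compresses into a single sentence. One bookkeeping slip worth noting: by Definition~\ref{hdef} the edge $v\ed{}s_\gb v$ requires $(s_\gb v)^{-1}\gb\in-\h$, i.e.\ $v^{-1}\gb\in\h$ rather than $-\h$, so your orientation labels in Cases~1 and~2 are swapped---but this is harmless for exactly the reason you yourself flag at the end.
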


\begin{proof}  Let $\mc P \in H^{*}_{T}(\h)$ and $w \in W$.  We must check the GKM conditions, \textit{i.e.} for every edge $u \ed{} s_{\ga}u$ is $(w \cdot \mc P)({u}) - (w \cdot \mc P)({s_{\ga}u}) \in \left \langle \ga \right \rangle$.  The undirected edge $u \stackrel{}{\longleftrightarrow} s_{\ga}u$ is in $\Gamma_{\h}$ if and only if the undirected edge $w^{-1}u \stackrel{}{\longleftrightarrow} s_{w^{-1}\ga}w^{-1}u \,(= w^{-1}s_{\ga}u)$ is too.  The GKM conditions ignore the edge orientation, so $\mc P({w^{-1}u}) - \mc P({w^{-1}}s_{\ga}u) \in \left \langle w^{-1}\ga\right \rangle$ is equivalent to $w\mc P({w^{-1}u}) - w\mc P({w^{-1}}s_{\ga}u) \in \left \langle \ga \right \rangle$.  The last expression is  $(w \cdot \mc P)({u}) - (w \cdot \mc P)({v})$ proving the claim.
\end{proof}

This action is easily describe on the graph when $w = s_{\ga}$ a reflection; the action of $s_{\ga}$ interchanges polynomials across edges corresponding to $s_{\ga}$ (some may have been deleted) and permutes the roots.

 \begin{center}
\begin{figure}[ht]
\includegraphics[width = 145mm]{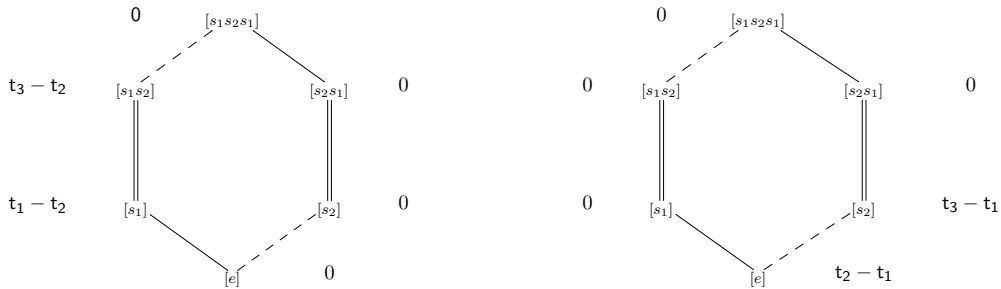}
\caption{A class and its image under $s_{1}\cdot$}\label{dotex}
\end{figure}
\end{center}

In order to study this representation we need to construct a basis of $H_{T}^*(\h)$. For the GKM ring $H_{T}^{*}(\Phi^{+})$, this basis consists of Schubert classes $\mc S^{w}$ \cite{Ty-RS}.  These are homogenous classes of degree $\ell(w)$ and the polynomial $\mc S^{w}(v)$ is nonzero only if $v > w$ in Bruhat order, \textit{i.e.} there is exists a path $w \ed{} \cdots \ed{} v$ in the Bruhat graph. 

These notions are generalized as follows.  Fix $\h$ a Hessenberg set.  The \eb{flow-up of $x \in W$} are all the vertices $y$ such that there is a path $x \ed{} \cdots \ed{} y$ in $\Gamma_{\h}$.  If $y$ is in the flow up we denote this by $x <_{\h} y$, and $\ell_{\h}(x) = k$ if there are $k$ edges ending at $x$.

\begin{definition}\label{flow}  $\mc P^{x} \in H_T^{\ell_{\h}(x)}(\h)$ is a \textbf{flow-up class} at $x \in W$ if  
\begin{itemize}
\item[(1)] $\displaystyle{\mc P^{x}(x) = \prod_{s_{\ga}x \ed{} x} \ga}$, where the product is over the edges ending at $x$; and
\item[(2)] if $\mc P^{x}(y) \ne 0$, then $y \ge_{\h} x$.
\end{itemize}  
\end{definition}

These classes have been studied previously by Guillemin and Zara for a general construction of GKM rings \cite{GuZa-E}.  If for every $w \in W$ flow-up classes exist (which is not always true) the family forms a basis of $H^{*}_{T}(\h)$ as a free-$\R[\ga_{1},\cdots, \ga_{k}]$ module \cite{GuZa-E}.  Fortunately, for $H^{*}_{T}(\h)$ flow-up classes always exist. 

\begin{theorem}\label{Geo} Let $\h$ be a Hessenberg set, then the GKM ring $H^{*}_{T}(\h)$ has a basis of flow-up classes.
\end{theorem}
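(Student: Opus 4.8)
By the work of Guillemin and Zara \cite{GuZa-E}, once a flow-up class is known to exist at every vertex the family $\{\mc P^{x}\}_{x \in W}$ is automatically a basis of $H^{*}_{T}(\h)$ as a free $\R[\ga_{1},\cdots,\ga_{k}]$-module; so the entire content of the theorem is an existence statement. My plan is to construct, for each fixed $x \in W$, a single class $\mc P^{x}$ by induction up the flow-up order. Since the directed edges of $\Gamma_{\h}$ lie among those of the Bruhat graph, $\Gamma_{\h}$ has no directed cycles, $<_{\h}$ is a genuine strict partial order, and I may fix once and for all a linear extension $y_{1}, y_{2}, \cdots$ of $<_{\h}$, so that $y_{i} <_{\h} y_{j}$ forces $i < j$.

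Fix $x$ and build $\mc P^{x}$ along this linear extension. Put $\mc P^{x}(y) = 0$ for every $y \not\ge_{\h} x$, and $\mc P^{x}(x) = \prod_{s_{\ga}x \ed{} x} \ga$ as demanded by Definition \ref{flow}(1); the GKM conditions along the in-edges of $x$ itself are then automatic, since each of the roots in question divides the product. For $y >_{\h} x$, every in-edge $z \ed{} y$ of $\Gamma_{\h}$ has $z <_{\h} y$, so $\mc P^{x}(z)$ has already been assigned, and I want to take $\mc P^{x}(y)$ to be a homogeneous polynomial of degree $\ell_{\h}(x)$ with $\mc P^{x}(y) - \mc P^{x}(z) \in \langle \ga_{zy} \rangle$ for every such in-edge, where $\ga_{zy}$ is the root labelling $z \ed{} y$. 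Granting that such a polynomial always exists, the resulting map is homogeneous of degree $\ell_{\h}(x)$ and satisfies the GKM conditions, hence lies in $H_{T}^{\ell_{\h}(x)}(\h)$: each GKM condition is attached to an edge $z \ed{} y$ with $z <_{\h} y$; when $y \ge_{\h} x$ it is exactly one of the congruences imposed while processing $y$ (with required residue $\mc P^{x}(z)$, which is $0$ if $z \not\ge_{\h} x$), and when $y \not\ge_{\h} x$ then also $z \not\ge_{\h} x$ and both values vanish. Conditions (1) and (2) of Definition \ref{flow} hold by construction, so $\mc P^{x}$ is a flow-up class.

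Everything therefore reduces to the one step I expect to be the main obstacle: at each $y >_{\h} x$ the simultaneous congruences $\mc P^{x}(y) - \mc P^{x}(z) \in \langle \ga_{zy} \rangle$ must admit a homogeneous solution of degree $\ell_{\h}(x)$. This can fail for an arbitrary GKM graph --- pairwise linear independence of the edge roots does not suffice to patch the local residues into one global polynomial (already three linear forms in general position in two variables give data that is pairwise compatible yet globally inconsistent), which is exactly why flow-up classes need not exist in general. The Hessenberg hypothesis is what forces solvability. The in-edges of $y$ are indexed by the elements $\gb$ of $\h$ lying in the inversion set $\{\gb \in \Phi^{+} : y\gb \in \Phi^{-}\}$, the edge for $\gb$ having source $y s_{\gb}$ and root $-y\gb$; the plan is to control the mutual position of these roots, and of the already-constructed residues $\mc P^{x}(y s_{\gb})$ --- which satisfy all GKM relations of the subgraph below $y$ --- by passing to the rank-$\le 2$ root subsystems spanned by pairs $\gb_{1}, \gb_{2}$. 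Since $\mc I_{\h}$ is an ideal, exactly which roots lying $\prec$-between $\gb_{1}$ and $\gb_{2}$ belong to $\h$ is forced, hence which of the edges among $y s_{\gb_{1}}, y s_{\gb_{2}}, \cdots$ survive in $\Gamma_{\h}$; these surviving edges carry enough GKM compatibilities to reduce solvability of the whole system at $y$ to the rank-two case --- the Bruhat graph of a Weyl group of rank at most two --- where it is classical. Carrying out this reduction, with the bookkeeping in Bruhat order and the classification of rank-two root systems it entails, is the bulk of the work; the rest is the formal skeleton above.
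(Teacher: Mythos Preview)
Your approach is entirely different from the paper's. The paper gives no combinatorial construction at all: it simply identifies $H^{*}_{T}(\h)$ with the equivariant cohomology of the regular semisimple Hessenberg variety, and then cites \cite[Theorem~8]{dMPS-HV} to verify that the graded ranks satisfy the numerical criterion of \cite[Theorem~2.1]{GuZa-E} for the existence of flow-up classes. That is a two-line geometric argument leaning entirely on external results; the paper itself calls the proof ``intentionally opaque'' and stresses that it yields existence only, leaving explicit construction as a separate and much harder problem --- which the remainder of the paper then solves only for the single Hessenberg set $\h_{\gamma}$.

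Your plan is therefore more ambitious: a uniform combinatorial construction for arbitrary $\h$ would supersede much of what follows. But there is a genuine gap at exactly the step you flag. The issue is not solvability of the simultaneous congruences at $y$ --- the in-edge labels are pairwise non-proportional, so some solution always exists --- but solvability \emph{in degree $\ell_{\h}(x)$}. At a vertex $y$ with $\ell_{\h}(y) > \ell_{\h}(x)$ you are imposing $\ell_{\h}(y)$ congruences on a homogeneous polynomial of degree $\ell_{\h}(x)$; this is overdetermined, and compatibility must be forced by the fact that the residues $\mc P^{x}(z)$ already satisfy all GKM relations below $y$. Your rank-two reduction does not explain why \emph{pairwise} compatibility (which is all a rank-two analysis delivers) yields a global solution of the correct degree; your own three-linear-forms example shows pairwise compatibility is insufficient in general, and ``the Hessenberg hypothesis tells you which edges survive'' is not by itself enough to close the gap. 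This obstruction is precisely what the Guillemin--Zara rank criterion detects, and the paper bypasses it by invoking the geometry rather than attempting the inductive patching you propose. If you want to pursue your route, the real work is to prove that for Hessenberg subgraphs the local congruence system at every $y$ is solvable in the prescribed degree --- and that is essentially equivalent to re-proving the dMPS rank computation combinatorially.
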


\begin{proof}  This follows because the GKM rings $H^{*}_{T}(\h)$ are the equivariant cohomology of the regular semisimple Hessenberg variety \cite{Te-HRY}, and \cite[Theorem 8]{dMPS-HV} proves for each $i$ that $\text{rank}_{\R[\Delta]} H_{T}^{i}(\h)$ satisfy \cite[Theorem 2.1]{GuZa-E}.   
\end{proof}

A drawback of this Theorem (besides its intentionally opaque nature) is that it only guarantees the existence of a flow-up basis.  We are still left with the problem of constructing the basis elements.  The construction of flow-up classes for GKM rings is important for several reasons.  First, an open problem of Schubert calculus is to determine the coefficients $c_{u v}^{w}$ defined in the expansion of the product of Schubert classes $\mc S^{u} \mc S^{v} = \sum c_{u v}^{w} \mc S^{w}$, so constructing generalized Schubert classes presents a new context to study this problem.  Second, flow-up classes form a basis of the representation of $W$ and without knowing a basis it will be essentially impossible to study the representation.

\begin{center}
\begin{figure}[ht]
\includegraphics[width = 145mm]{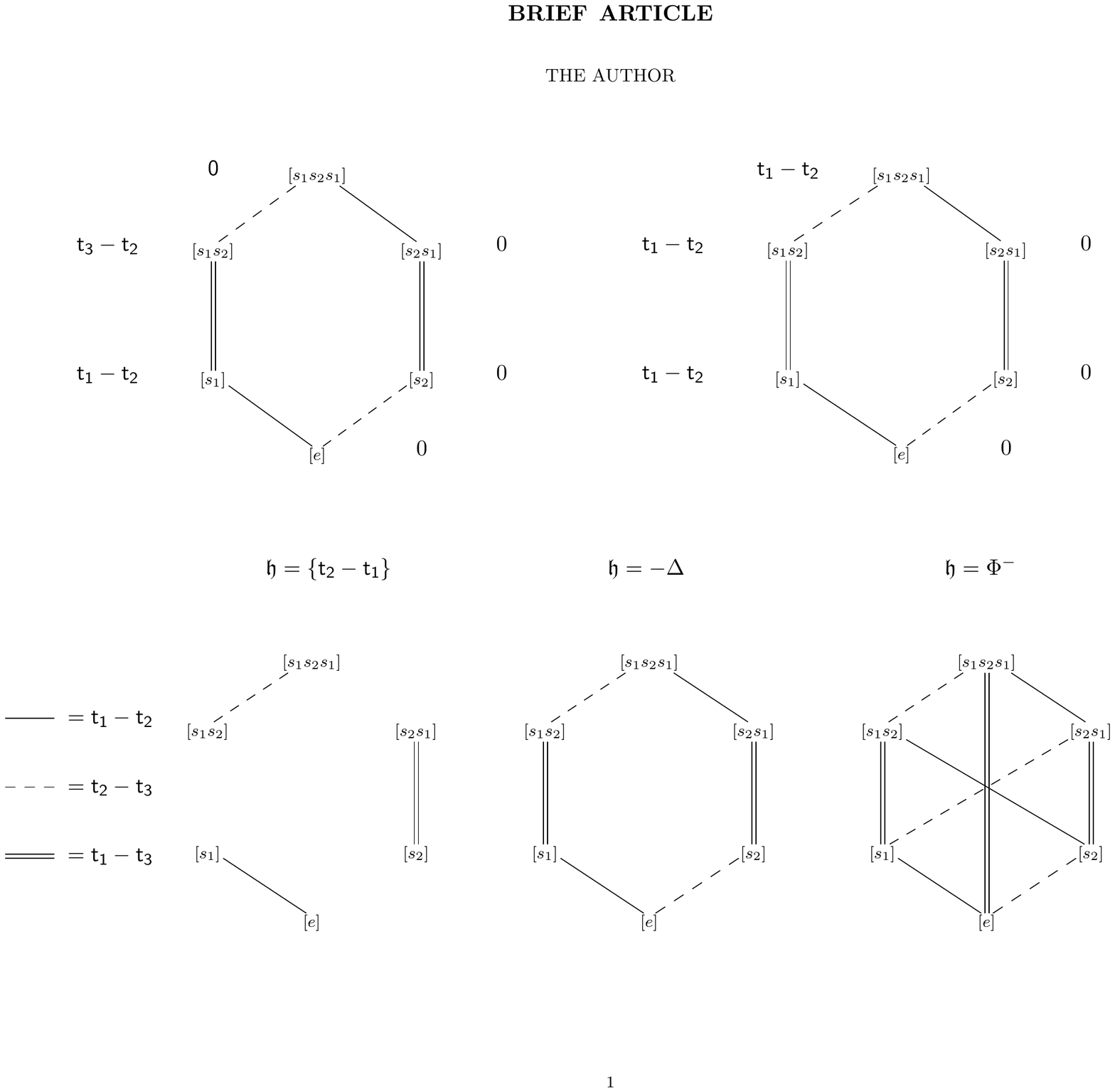}
\caption{Non-unique flow-classes}\label{unun}
\end{figure}
\end{center}

There do exist algorithms for the polynomials $\mc P^{x}(y)$ in general GKM rings \cite{GuZa-E, GoTo-GS}.  We adopt an alternative approach which emulates the construction of Schubert classes.  We use the representation of $W$ on $H^{*}_{T}(\h)$ (defined in Equation (\ref{dot})) to recursively build a new flow-up class.  This allows us to define a divided difference operator which as in the classical case recursive defines the flow-up class, \textit{i.e.} if $\mc P^{w}$ is know and $s_{i}w < w$, then $\partial_{i}^{\gamma}\mc P^{w} = \mc P^{s_{i}w}$.  A fundamental difficulty for us is that for a fixed $w \in W$ a flow-up class at $w$ is not unique (cf Figure \ref{unun}), a property enjoyed by Schubert classes.

\subsection{$\h$-inversions}
\label{h-inv}

The \emph{inversions of $w$} \textit{i.e.} $N_{w} : = \left\{\ga \in \Phi^{+} \mid w^{-1}\ga \in \Phi^{-} \right \}$ describe the edges ending at $w$ in the Bruhat graph.  This motivates the following 

\begin{definition}  Let $\h$ be a Hessenberg set.  For $w \in W$ the set \begin{math}N^{\h}_{w} := \left \{ \ga \in \Phi^{+} \mid w^{-1}\ga \in -\h \right \} \end{math}
is called the \textbf{$\h$-inversions} of $w$.
\end{definition}

The roots in $N^{\h}_{w}$ describe the edges ending at $w$ in $\Gamma_{\h}$, so knowing only $N_{w}^{\h}$ for all $w \in W$ alone determines the GKM ring.  Therefore, it is important to understand how $\h$-inversions change as $w \in W$ varies.

\begin{definition}  Let $w, v \in W$, we say $v$ is a \eb{cover} of $w$ if $w \ed{} v \in \Gamma_{W}$ and 
\begin{itemize}
\item[(1)] $\ell(v) = \ell(w) + 1$ and
\item[(2)] $v = s_{\ga}w$ 
\end{itemize}
\end{definition}

The following Proposition determines how the set $N_{w}$ and $N_{v}$ differ when $v$ is a cover of $w$ (cf. \cite{Ty-RS}).    

\begin{proposition}\label{inv}  Suppose $v$ is a cover of $w$, then 
\begin{displaymath}N_{v} = \{\ga\} \cup (s_{\ga}N_{w} \cap \Phi^{+}) \cup (N_{w} \cap s_{\ga}\Phi^{-}).\end{displaymath} 
\end{proposition}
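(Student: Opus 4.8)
The plan is to rewrite the set $N_v$ entirely in terms of $w$ by using $v^{-1} = w^{-1}s_\ga$ (valid since $s_\ga$ is an involution), which turns the definition into
\[ N_v = \{\gb \in \Phi^+ : w^{-1}s_\ga\gb \in \Phi^- \}. \]
I will repeatedly use two facts packaged in the hypothesis that $v$ is a cover of $w$: the edge $w \ed{} v$ of $\Gamma_W$ says $v^{-1}\ga = -w^{-1}\ga \in \Phi^-$, i.e.\ $w^{-1}\ga \in \Phi^+$; and $\ell(v) = \ell(w)+1$, i.e.\ $\ord{N_v} = \ord{N_w}+1$.

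First I would show that each of the three sets on the right-hand side is contained in $N_v$. For $\ga$ itself, $w^{-1}s_\ga\ga = -w^{-1}\ga \in \Phi^-$, so $\ga \in N_v$. If $\gb = s_\ga\gga$ with $\gga \in N_w$ and $\gb \in \Phi^+$, then $w^{-1}s_\ga\gb = w^{-1}\gga \in \Phi^-$, so $\gb \in N_v$; this settles $s_\ga N_w \cap \Phi^+$. Finally, let $\gb \in N_w$ with $s_\ga\gb \in \Phi^-$ and set $\delta := -s_\ga\gb \in \Phi^+$; then $w^{-1}s_\ga\gb = -w^{-1}\delta$, so $\gb \in N_v$ exactly when $\delta \notin N_w$. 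The one substantive point is that $\delta \notin N_w$ is automatic. Writing $c := 2(\ga,\gb)(\ga,\ga)^{-1}$ for the Cartan integer, the reflection formula gives $\delta = c\ga - \gb$, hence $\gb + \delta = c\ga$; since $\gb$, $\delta$ and $\ga$ all lie in $\Phi^+$ this forces $c > 0$ (a non-positive $c$ would make the nonzero vector $c\ga$ a non-positive combination of the base). If $\delta$ were in $N_w$, then $c\,w^{-1}\ga = w^{-1}\gb + w^{-1}\delta$ would be a sum of two negative roots, so $w^{-1}\ga$ would have all base-coordinates $\le 0$ and, being a root, would lie in $\Phi^-$ --- contradicting $w^{-1}\ga \in \Phi^+$. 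Hence $\delta \notin N_w$ and $\gb \in N_v$, which settles $N_w \cap s_\ga\Phi^-$.

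Next I would count. Partition $\Phi^+ = A \sqcup B$, where $A = \{\gga \in \Phi^+ : s_\ga\gga \in \Phi^+\}$ and $B = \{\gga \in \Phi^+ : s_\ga\gga \in \Phi^-\}$; then $\ga \in B$, and $\ga \notin N_w$ because $w^{-1}\ga \in \Phi^+$. A direct check shows $s_\ga N_w \cap \Phi^+ = s_\ga(N_w \cap A) \subseteq A$ and $N_w \cap s_\ga\Phi^- = N_w \cap B$, so the three sets $\{\ga\}$, $s_\ga N_w \cap \Phi^+$ and $N_w \cap s_\ga\Phi^-$ are pairwise disjoint with sizes $1$, $\ord{N_w \cap A}$ and $\ord{N_w \cap B}$. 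Their union thus has $1 + \ord{N_w} = 1 + \ell(w) = \ell(v) = \ord{N_v}$ elements, and since it is contained in $N_v$ by the previous paragraph it must equal $N_v$, which is the assertion.

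The only step carrying real content is the proof that $\delta \notin N_w$ in the third case; everything else is bookkeeping with the definition of $N_v$ together with a cardinality comparison. The two ingredients that make that step work are the positivity relation $\gb + \delta = c\ga$ with $c > 0$ coming from the root system, and the cover hypothesis in the form $w^{-1}\ga \in \Phi^+$.
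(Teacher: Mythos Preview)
Your argument is correct. The paper itself does not prove this proposition; it simply records the statement and refers the reader to \cite{Ty-RS}. So there is no ``paper's own proof'' to compare against, and your write-up supplies exactly the kind of self-contained verification that the extended abstract omits.

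A couple of remarks on presentation. Your three inclusions and the cardinality count are clean; the only genuinely non-formal step, as you correctly identify, is showing $\delta := -s_\ga\gb \notin N_w$ in the case $\gb \in N_w \cap s_\ga\Phi^-$. Your argument there---writing $\gb + \delta = c_{\ga\gb}\,\ga$ with $c_{\ga\gb} > 0$ and observing that $w^{-1}\gb, w^{-1}\delta \in \Phi^-$ would force $w^{-1}\ga \in \Phi^-$, contradicting the cover hypothesis---is exactly the positivity trick that the paper later reuses in the proof of Proposition~\ref{h roots}(2). So your approach is fully in the spirit of the paper. The partition $\Phi^+ = A \sqcup B$ according to the sign of $s_\ga\gga$ is an efficient way to see the three pieces are disjoint and to make the count $1 + \ord{N_w \cap A} + \ord{N_w \cap B} = 1 + \ord{N_w} = \ord{N_v}$ transparent.
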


This Proposition generalizes to $\h$-inversions. 

\begin{proposition}\label{h roots}  Suppose $v$ is a cover of $w$.  For $\gb \in N_{v}$ and

\begin{itemize}
\item[(1)]if $\gb \in s_{\ga}N_{w} \cap \Phi^{+}$ it follows $\gb \in N^{\h}_{v}$ if and only if $s_{\ga}\gb \in N^{\h}_{w}$ or
\item[(2)] if $\gb \in N_{w} \cap s_{\ga}\Phi^{-}$ it follows when $\gb \in N^{\h}_{v}$ then $\gb \in N^{\h}_{w}$.
\end{itemize} 
\end{proposition}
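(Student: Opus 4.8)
The plan is to run everything off the single identity $v^{-1}=w^{-1}s_{\ga}$ coming from $v=s_{\ga}w$, together with the sign data already packaged in Proposition \ref{inv}. As a preliminary I would record that, since $w\ed{}v$ is an edge of $\Gamma_{W}$, we have $v^{-1}\ga\in\Phi^{-}$ and hence $w^{-1}\ga=-v^{-1}\ga\in\Phi^{+}$; this fact is needed only for part (2).

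Part (1) should be essentially formal and use nothing about the ideal $\mc I_{\h}$. If $\gb\in s_{\ga}N_{w}\cap\Phi^{+}$ then $s_{\ga}\gb\in N_{w}\subseteq\Phi^{+}$, so $s_{\ga}\gb$ is a positive root, and $v^{-1}\gb=w^{-1}(s_{\ga}\gb)$. Thus the condition $\gb\in N^{\h}_{v}$, i.e. $v^{-1}\gb\in-\h$, reads $w^{-1}(s_{\ga}\gb)\in-\h$, which --- because $s_{\ga}\gb$ is already positive --- is precisely the condition $s_{\ga}\gb\in N^{\h}_{w}$. So part (1) takes one line.

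For part (2), I would assume $\gb\in N_{w}\cap s_{\ga}\Phi^{-}$ and $\gb\in N^{\h}_{v}$, and aim to show $\gb\in N^{\h}_{w}$. Step one: observe $\gb\in\Phi^{+}$ (from $\gb\in N_{w}$) while $s_{\ga}\gb\in\Phi^{-}$ (from $\gb\in s_{\ga}\Phi^{-}$); writing $s_{\ga}\gb=\gb-c_{\ga\gb}\ga$ and comparing coefficients in the simple-root basis, a non-positive $c_{\ga\gb}$ would make $s_{\ga}\gb$ a non-negative combination of simple roots, contradicting $s_{\ga}\gb\in\Phi^{-}$; hence $c_{\ga\gb}\ge 1$. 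Step two is sign bookkeeping: $\gb\in N^{\h}_{v}$ gives $-(v^{-1}\gb)=w^{-1}(-s_{\ga}\gb)\in\h\subseteq\Phi^{+}$, while $\gb\in N_{w}$ gives $-(w^{-1}\gb)\in\Phi^{+}$, and the difference of these two positive roots is
\[ \bigl(-w^{-1}s_{\ga}\gb\bigr)-\bigl(-w^{-1}\gb\bigr)=w^{-1}(\gb-s_{\ga}\gb)=c_{\ga\gb}\,w^{-1}\ga, \]
a positive-integer multiple of the positive root $w^{-1}\ga$, so $-(w^{-1}\gb)\prec-(w^{-1}s_{\ga}\gb)$. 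Step three invokes the structure of $\h$: since $\mc I_{\h}$ is an ideal it is upward closed under $\prec$, so $\h=\Phi^{+}\setminus\mc I_{\h}$ is downward closed inside $\Phi^{+}$; therefore $-(w^{-1}s_{\ga}\gb)\in\h$ together with $-(w^{-1}\gb)\prec-(w^{-1}s_{\ga}\gb)$ forces $-(w^{-1}\gb)\in\h$, i.e. $w^{-1}\gb\in-\h$, i.e. $\gb\in N^{\h}_{w}$.

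I expect the only step requiring genuine care to be the inequality $c_{\ga\gb}\ge 1$: this positivity is precisely what turns the displayed difference into a sum of positive roots and so lets the ideal property of $\mc I_{\h}$ do its work; it also explains why only one implication holds in (2), since $-(w^{-1}s_\ga\gb)$ sits strictly above $-(w^{-1}\gb)$. Everything else is careful tracking of the signs of $w^{-1}$ applied to $\gb$, $s_{\ga}\gb$ and $\ga$, all of which is supplied by Proposition \ref{inv} and the edge condition $w\ed{}v$.
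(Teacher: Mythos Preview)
Your argument is correct and follows essentially the same route as the paper's own proof: part (1) is the one-line identity $v^{-1}\gb=w^{-1}(s_{\ga}\gb)$, and part (2) proceeds by showing $c_{\ga\gb}>0$ from $s_{\ga}\gb\in\Phi^{-}$, computing the difference $c_{\ga\gb}\,w^{-1}\ga$, and invoking the ideal property of $\mc I_{\h}$. The only cosmetic difference is that you negate throughout and work inside $\Phi^{+}$ with the downward-closure of $\h$, whereas the paper stays in $\Phi^{-}$ and (implicitly) uses that $-\h$ is upward closed there; your bookkeeping is a bit cleaner and in fact sidesteps a small sign slip in the paper's write-up, but the underlying proof is the same.
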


\begin{proof}  For Part $(1)$ if $s_{\ga}\gb \in N_{w}$ the equivalence follows because $v^{-1}\gb = w^{-1}s_{\ga}\gb$.  

For Part $(2)$ we show $v^{-1}\gb \prec w^{-1}\gb$ which by definition of $\h$ implies $w^{-1}\gb \in \h$ because $v^{-1}\gb \in \h$.  The hypothesis $\gb \in N_{w} \cap s_{\ga}\Phi^{-}$ implies $s_{\ga}\gb = \gb - c_{\ga\gb}\ga \in \Phi^{-}$, so the Cartan integer $c_{\ga \gb} > 0$.   Therefore, since $v = s_{\ga}w$ we have $w^{-1}\gb - v^{-1}\gb = c_{\ga \gb}w^{-1}\ga$.  Since $v$ is a cover of $w$ and $v = s_{\ga}w$ it follows $w^{-1}\ga \in \Phi^{+}$ which implies $v^{-1}\gb \prec w^{-1}\gb$.
\end{proof}

\begin{corollary}\label{simple}  Let $v$ be a cover of $w$.  If $\ga \in \Delta$ and $\ga \in N_{v}^{\h}$, then $N_{v}^{\h} = \{\ga\} \cup s_{\ga}N_{w}^{\h}$, otherwise if $\ga \not\in N_{v}^{\h}$, then $N_{v}^{\h} = s_{\ga}N_{w}^{\h}$.  
\end{corollary}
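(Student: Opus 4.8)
The plan is to specialize Propositions~\ref{inv} and~\ref{h roots} to the situation where the root $\ga$ with $v = s_{\ga}w$ is simple, and to read off both cases at once. I would begin by collecting two elementary observations. Since $\ga \in \Delta$, the simple reflection $s_{\ga}$ permutes $\Phi^{+}\setminus\{\ga\}$ and sends $\ga$ to $-\ga$; in particular $s_{\ga}\Phi^{-} = (\Phi^{-}\setminus\{-\ga\})\cup\{\ga\}$. Since $v$ is a cover of $w$, the edge $w \ed{} v$ with $v = s_{\ga}w$ forces $v^{-1}\ga \in \Phi^{-}$, and because $v^{-1}\ga = -w^{-1}\ga$ this means $w^{-1}\ga \in \Phi^{+}$, i.e. $\ga \notin N_{w}$.

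Next I would simplify the Bruhat-inversion formula of Proposition~\ref{inv}. The term $N_{w}\cap s_{\ga}\Phi^{-}$ equals $N_{w}\cap\{\ga\}$, which is empty since $N_{w}\subseteq\Phi^{+}$ and $\ga\notin N_{w}$. The term $s_{\ga}N_{w}\cap\Phi^{+}$ equals $s_{\ga}N_{w}$, since $N_{w}\subseteq\Phi^{+}\setminus\{\ga\}$ forces $s_{\ga}N_{w}\subseteq\Phi^{+}$. Thus $N_{v} = \{\ga\}\cup s_{\ga}N_{w}$, a disjoint union since $-\ga\notin N_{w}$.

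Finally I would transfer this to $\h$-inversions. As $N^{\h}_{v}\subseteq N_{v}$, we may write $N^{\h}_{v} = (N^{\h}_{v}\cap\{\ga\})\cup(N^{\h}_{v}\cap s_{\ga}N_{w})$. Part~(2) of Proposition~\ref{h roots} is vacuous here because $N_{w}\cap s_{\ga}\Phi^{-} = \emptyset$, so for each $\gb\in s_{\ga}N_{w} = s_{\ga}N_{w}\cap\Phi^{+}$ Part~(1) applies and yields $\gb\in N^{\h}_{v}$ if and only if $s_{\ga}\gb\in N^{\h}_{w}$. As $\gb$ runs over $s_{\ga}N_{w}$ the element $s_{\ga}\gb$ runs bijectively over $N_{w}\supseteq N^{\h}_{w}$, so the set of such $\gb$ is precisely $s_{\ga}N^{\h}_{w}$; that is, $N^{\h}_{v}\cap s_{\ga}N_{w} = s_{\ga}N^{\h}_{w}$. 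Since $N^{\h}_{v}\cap\{\ga\}$ is $\{\ga\}$ when $\ga\in N^{\h}_{v}$ and $\emptyset$ otherwise, the two cases of the Corollary follow. The only point requiring any care is the verification that Part~(2) of Proposition~\ref{h roots} contributes nothing --- equivalently that $N_{w}\cap s_{\ga}\Phi^{-}$ vanishes for a simple cover root --- together with the bookkeeping fact $N^{\h}_{w}\subseteq N_{w}$ that makes $s_{\ga}$ carry $N^{\h}_{w}$ onto the relevant part of $N^{\h}_{v}$.
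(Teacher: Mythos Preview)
Your proof is correct and is precisely the intended specialization of Propositions~\ref{inv} and~\ref{h roots} to a simple cover root; the paper states the result as an immediate corollary without proof, and your argument fills in exactly those routine details (that $N_{w}\cap s_{\ga}\Phi^{-}=\emptyset$ and $s_{\ga}N_{w}\subseteq\Phi^{+}$ when $\ga\in\Delta$, then apply Part~(1)).
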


\begin{corollary}\label{v length}
Let $v$ be a cover of $w$, then $\ord{N_{v}} - \ord{\Phi^{-} \setminus \h} \le \ord{N_{v}^{\h}} \le \ord{N_{w}^{\h}} + 1$
\end{corollary}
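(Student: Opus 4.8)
The plan is to derive both inequalities directly from Propositions \ref{inv} and \ref{h roots}, using only the bookkeeping fact that $\h$-inversions are honest inversions: since $-\h\subseteq\Phi^{-}$ we have $N^{\h}_{x}\subseteq N_{x}$ for every $x\in W$. Write $v=s_{\ga}w$ with $\ga\in\Phi^{+}$; because $v$ covers $w$ the root $\ga$ satisfies $w^{-1}\ga\in\Phi^{+}$, so $\ga\in N_{v}$ but $\ga\notin N_{w}$, and in particular $\ga\notin N^{\h}_{w}$.

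For the right-hand inequality I would build an injection $f\colon N^{\h}_{v}\setminus\{\ga\}\hookrightarrow N^{\h}_{w}$. By Proposition \ref{inv} every $\gb\in N_{v}\setminus\{\ga\}$ lies in exactly one of the two disjoint sets $s_{\ga}N_{w}\cap\Phi^{+}$ and $N_{w}\cap s_{\ga}\Phi^{-}$; set $f(\gb)=s_{\ga}\gb$ in the first case and $f(\gb)=\gb$ in the second. Proposition \ref{h roots}(1) shows that if $\gb\in N^{\h}_{v}$ then $s_{\ga}\gb\in N^{\h}_{w}$ in the first case, and Proposition \ref{h roots}(2) shows $\gb\in N^{\h}_{w}$ in the second, so $f$ really lands in $N^{\h}_{w}$. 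It is injective on each of the two pieces for trivial reasons, and a collision across the two pieces is impossible, because an output coming from the first piece lies in $s_{\ga}\Phi^{+}$ while one from the second lies in $s_{\ga}\Phi^{-}$, and $s_{\ga}\Phi^{+}\cap s_{\ga}\Phi^{-}=\emptyset$. Hence $\ord{N^{\h}_{v}}-1\le\ord{N^{\h}_{v}\setminus\{\ga\}}\le\ord{N^{\h}_{w}}$, which is the claim.

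For the left-hand inequality, observe that $\gb\mapsto v^{-1}\gb$ is a bijection of $\Phi$ that carries $N_{v}\setminus N^{\h}_{v}=\{\gb\in\Phi^{+}:v^{-1}\gb\in\Phi^{-}\setminus(-\h)\}$ injectively into $\Phi^{-}\setminus(-\h)$. Since $-\h\subseteq\Phi^{-}$, this target set has cardinality $\ord{\Phi^{-}}-\ord{\h}=\ord{\Phi^{-}\setminus\h}$ (the negative roots whose negatives are not Hessenberg), so $\ord{N_{v}}-\ord{N^{\h}_{v}}=\ord{N_{v}\setminus N^{\h}_{v}}\le\ord{\Phi^{-}\setminus\h}$, and rearranging gives the inequality. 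Note this half uses nothing about $v$ being a cover.

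The computations are routine and the real content is already packaged in Proposition \ref{h roots}; the only points that demand attention are the sign conventions (keeping $N^{\h}_{x}\subseteq N_{x}$ straight and reading $\Phi^{-}\setminus\h$ as the negative roots $\mu$ with $-\mu\notin\h$) and the verification that the two cases in the definition of $f$ cannot produce the same value, which is the one place where the disjointness of $\Phi^{+}$ and $\Phi^{-}$ is essential. I do not anticipate any genuine obstacle beyond this bookkeeping.
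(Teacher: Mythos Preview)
Your argument is correct and is exactly the derivation the paper intends: the corollary is stated without proof as an immediate consequence of Propositions \ref{inv} and \ref{h roots}, and your injection $f\colon N_v^{\h}\setminus\{\ga\}\hookrightarrow N_w^{\h}$ together with the trivial count $\ord{N_v\setminus N_v^{\h}}\le\ord{\Phi^{-}\setminus(-\h)}$ is precisely how one extracts the two bounds. Your reading of $\Phi^{-}\setminus\h$ as $\Phi^{-}\setminus(-\h)$ (equivalently, as having cardinality $\ord{\Phi^{+}\setminus\h}$) is the intended one, since with $\h\subset\Phi^{+}$ the literal set difference $\Phi^{-}\setminus\h=\Phi^{-}$ would make the left inequality vacuous.
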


The next Proposition determines the values of flow-up classes at the covers in the Bruhat order.  It is key to constructing a family of flow-up classes later. 

\begin{proposition}\label{v flow}  Let $\mc P$ be any flow-up class at $w$.  Suppose $v$ is a cover of $w$, then $\mc P(v)$ can be determined as follows
\begin{itemize}
\item[(1)] if $v^{-1}(\ga) \not\in \h$ then $\mc P(v) = 0$\; (\textit{i.e.} the edge $w \ed{} v \in \Gamma_{W}$ is deleted in $\Gamma_{\h}$); otherwise
\item[(2)] if $\ga \in \Delta \cap N^{\h}_{v}$ then $\mc P(v) = s_{\ga}\mc P(w)$;
\item[(3)] if $\ord{N_{v}^{\h}} = \ord{N_{w}^{\h}} + 1$ then 
\begin{displaymath}
\mc P(v) = \prod_{\gb \in N_{v}^{\h} \setminus \left\{\ga\right\}} \gb; \; \text{or}
\end{displaymath}
\item[(4)] if $\ord{N_{v}^{\h}} \le \ord{N_{w}^{\h}}$ then
\begin{displaymath}
\mc P(v) = f \prod_{\gb \in N_{v}^{\h} \setminus \left\{\ga\right\}} \gb
\end{displaymath}
for some $f \in \R[\Delta]$ of degree $\ord{N_{w}^{\h}} - \ord{N_{v}^{\h} \setminus \{\ga\}}$ with $f \equiv\prod_{\mu \in (N_{w}^{\h} \cap N_{v}) - N_{v}^{\h}}\mu \,\pmod{ \langle \ga \rangle}$.
\end{itemize}
\end{proposition}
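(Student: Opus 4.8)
\noindent\emph{Proof strategy.} The plan is to pin down $\mc P(v)$ from three ingredients: a length argument controlling what can lie above $w$ in $\Gamma_\h$, a divisibility argument coming from the GKM relations around $v$, and a reduction modulo $\langle\ga\rangle$ that determines the remaining cofactor via Propositions~\ref{inv} and \ref{h roots}. First I would record that every edge $u\ed{} x$ of $\Gamma_\h$ strictly raises length, since its label $\gb$ satisfies $x^{-1}\gb\in-\h\subseteq\Phi^-$; hence if $y\ge_{\h} z$ with $\ell(y)\le\ell(z)$ then $y=z$. I would also note that, $v$ being a cover, $w^{-1}\ga\in\Phi^+$, so $\ga\notin N_w\supseteq N_w^{\h}$, and that $\ga\in N_v^{\h}$ is exactly the condition that the edge $w\ed{} v$ survives in $\Gamma_\h$.

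Part~(1) is then immediate: if $\ga\notin N_v^{\h}$, any path $w\ed{}\cdots\ed{} v$ in $\Gamma_\h$ must be the single edge $w\ed{} v$ (because $\ell(v)=\ell(w)+1$ and edges raise length) with label $\ga$, which is excluded; so $v\not\ge_{\h} w$ and $\mc P(v)=0$ by Definition~\ref{flow}(2). For Parts~(2)--(4) I assume $\ga\in N_v^{\h}$. I would first show that $\prod_{\gb\in N_v^{\h}\setminus\{\ga\}}\gb$ divides $\mc P(v)$ in $\R[\Delta]$: for each such $\gb$ the edge $s_\gb v\ed{} v$ lies in $\Gamma_\h$, and since $\ell(s_\gb v)<\ell(v)=\ell(w)+1$ the length argument forces $s_\gb v\ge_{\h} w\iff s_\gb v=w\iff\gb=\ga$, so $\mc P(s_\gb v)=0$ and the GKM relation on that edge gives $\gb\mid\mc P(v)$; as distinct positive roots are pairwise non-proportional and $\R[\Delta]$ is a UFD, the product divides. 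Next I would use the GKM relation on the surviving edge $w\ed{} v$, namely $\mc P(v)\equiv\mc P(w)=\prod_{\delta\in N_w^{\h}}\delta\pmod{\langle\ga\rangle}$: in the integral domain $\R[\Delta]/\langle\ga\rangle$, and since no $\delta\in N_w^{\h}$ equals $\pm\ga$, the image of $\mc P(w)$ is nonzero, so $\mc P(v)\ne 0$; as $\mc P\in H_T^{\ell_{\h}(w)}(\h)$ this makes $\mc P(v)$ homogeneous of degree $\ord{N_w^{\h}}$. Combining, $\mc P(v)=f\prod_{\gb\in N_v^{\h}\setminus\{\ga\}}\gb$ with $f$ homogeneous of degree $\ord{N_w^{\h}}-\ord{N_v^{\h}\setminus\{\ga\}}$, which is $\ge 0$ by Corollary~\ref{v length}.

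It remains to identify $f$. Reducing the last equation modulo $\langle\ga\rangle$ and using that $s_\ga$ acts trivially on $\R[\Delta]/\langle\ga\rangle$ gives $\bar f\cdot\prod_{\gb\in N_v^{\h}\setminus\{\ga\}}\bar\gb=\prod_{\delta\in N_w^{\h}}\bar\delta$. By Proposition~\ref{inv}, the rule sending $\delta$ to $s_\ga\delta$ when $s_\ga\delta\in\Phi^+$ and to $\delta$ otherwise is a bijection $\phi\colon N_w\to N_v\setminus\{\ga\}$ with $\overline{\phi(\delta)}=\bar\delta$ in either case. Applying Proposition~\ref{h roots} to the two branches of this rule, I would check that $\phi$ carries $N_w^{\h}$ onto $N_v^{\h}\setminus\{\ga\}$ together with exactly the surplus roots $\mu\in N_w^{\h}$ that $s_\ga$ pushes to the negative side and that fail to remain $\h$-inversions of $v$ (the set $(N_w^{\h}\cap N_v)\setminus N_v^{\h}$ of the statement), on which $\phi$ is the identity. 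Cancelling in the domain yields $f\equiv\prod_{\mu\in(N_w^{\h}\cap N_v)\setminus N_v^{\h}}\mu\pmod{\langle\ga\rangle}$. Specializing: in Parts~(2) and (3) we have $\ord{N_v^{\h}}=\ord{N_w^{\h}}+1$, so $\deg f=0$ and the surplus is empty, whence $f=1$ and $\mc P(v)=\prod_{\gb\in N_v^{\h}\setminus\{\ga\}}\gb$; in Part~(2), where $\ga$ is simple, Corollary~\ref{simple} identifies this product with $s_\ga\prod_{\delta\in N_w^{\h}}\delta=s_\ga\mc P(w)$. Part~(4) is then the statement just obtained, with the degree of $f$ supplied by Corollary~\ref{v length}.

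The step I expect to be the main obstacle is the bijective bookkeeping in the third paragraph: matching $N_w^{\h}$ against $N_v^{\h}\setminus\{\ga\}$ through $\phi$ requires a careful case analysis --- grounded in Propositions~\ref{inv} and \ref{h roots} and in the fact that $\h$ is downward closed under $\prec$ --- of how $s_\ga$ moves each inversion of $w$ across the wall $\ga^{\perp}$ and of whether its image stays an $\h$-inversion, the delicate point being to describe the surplus set precisely so that its size agrees with the degree count from Corollary~\ref{v length}. Everything else --- Part~(1), the divisibility claim, and the nonvanishing and degree of $\mc P(v)$ --- is routine manipulation of the GKM relations together with unique factorization in $\R[\Delta]$.
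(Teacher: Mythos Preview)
Your proposal is correct and follows exactly the route the paper indicates: the paper's own proof is the single line ``Use Proposition~\ref{h roots} and the GKM conditions to determine these values,'' and you have unpacked precisely that --- the length/flow-up argument for vanishing, the GKM divisibility at the edges of $\Gamma_\h$ incident to $v$, and the reduction modulo $\langle\ga\rangle$ via the inversion bijection of Propositions~\ref{inv} and~\ref{h roots}. Your identification of the bijective bookkeeping (matching $N_w^{\h}$ against $N_v^{\h}\setminus\{\ga\}$ and isolating the surplus $(N_w^{\h}\cap N_v)\setminus N_v^{\h}$) as the only nontrivial step is apt; that case analysis is exactly what the paper leaves implicit.
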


\begin{proof}  Use Proposition \ref{h roots} and the GKM conditions to determine these values.
\end{proof}

\section{Highest root Hessenberg sets}
\label{root}

Suppose $\Phi$ is an irreducible root system, \textit{i.e.} $\Phi$ cannot be expressed as a disjoint union $\Phi = \Psi \cup \Psi'$ both of which are root systems.  For $\Phi$ irreducible there exists a unique \textbf{highest root} $\gamma \in \Phi^{+}$ such that $\ga \prec \gamma$ for all $\ga \in \Phi$ \cite[Section 2.9(3)]{Hu-RG}. If $\h_{\gamma} = \Phi^{+} \setminus \left\{ \gamma\right\}$, then $\h_{\gamma}$ is a Hessenberg set.

For $w \in W$ let $N^{\gamma}_{w} = N^{\h_{\gamma}}_{w}$ and $\ell_{\gamma}(w) = \ell_{\h_{\gamma}}(w)$.  We will be working with both the partial order defined by the flow-up $<_{\gamma}$ and the Bruhat order $<$.  Working with the highest root Hessenberg set simplifies much of the variation which occurs between $N_{w}$ and $N^{\gamma}_{w}.$  For example

\begin{lemma}\label{gamma len}  Suppose $v> w \in W.$   We have $\ell_{\gamma}(w) = \ell_{\gamma}(v)$ if and only if $v$ is a cover of $w$; $N_{w} = N_{w}^{\gamma}$; and there exists $\gb \in N_{v}$ such that $v^{-1}\gb = -\gamma$.
\end{lemma}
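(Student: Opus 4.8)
The plan is to reduce the statement to an elementary count of inversion sets. Recall that $\ell_\gamma(x)=\ord{N_x^{\gamma}}$, since $N_x^{\gamma}$ indexes the edges of $\Gamma_{\h_\gamma}$ ending at $x$, and that $\ell(x)=\ord{N_x}$. The first step is to record the ``dictionary'' coming from $\h_\gamma=\Phi^{+}\setminus\{\gamma\}$: here $-\h_\gamma=\Phi^{-}\setminus\{-\gamma\}$, so $N_x\setminus N_x^{\gamma}=\{\ga\in\Phi^{+}\mid x^{-1}\ga=-\gamma\}$, which is $\{-x\gamma\}$ when $x\gamma\in\Phi^{-}$ and empty otherwise. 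Hence $\ord{N_x^{\gamma}}$ equals $\ord{N_x}$ or $\ord{N_x}-1$; it equals $\ord{N_x}$ exactly when $N_x=N_x^{\gamma}$, and it equals $\ord{N_x}-1$ exactly when some $\gb\in N_x$ has $x^{-1}\gb=-\gamma$. (These estimates are also Corollary~\ref{v length} specialized to $\ord{\Phi^{+}\setminus\h_\gamma}=1$.) These are the only facts about $\h_\gamma$ the argument needs.

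For the direction $(\Leftarrow)$ the three hypotheses translate directly. From $N_w=N_w^{\gamma}$ we get $\ell_\gamma(w)=\ell(w)$; $v$ being a cover gives $\ell(v)=\ell(w)+1$; and a $\gb\in N_v$ with $v^{-1}\gb=-\gamma$ forces $\ell_\gamma(v)=\ord{N_v}-1=\ell(v)-1=\ell(w)$. Hence $\ell_\gamma(v)=\ell_\gamma(w)$.

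For $(\Rightarrow)$, from $v>w$ there is a directed path from $w$ to $v$ in $\Gamma_W$, and every edge of $\Gamma_W$ strictly increases length, so $\ell(v)\ge\ell(w)+1$. Combining this with the estimates above yields the chain
\[\ell(v)-1\le\ord{N_v^{\gamma}}=\ell_\gamma(v)=\ell_\gamma(w)=\ord{N_w^{\gamma}}\le\ord{N_w}=\ell(w)\le\ell(v)-1,\]
so all inequalities are equalities. Equality at the two ends forces $\ell(v)=\ell(w)+1$, whence the path from $w$ to $v$ is a single edge, $v=s_\ga w$, and $v$ is a cover of $w$; the equality $\ord{N_w^{\gamma}}=\ord{N_w}$ gives $N_w=N_w^{\gamma}$; and the equality $\ord{N_v^{\gamma}}=\ord{N_v}-1$ makes $N_v\setminus N_v^{\gamma}$ nonempty, so its unique element $\gb=-v\gamma\in N_v$ satisfies $v^{-1}\gb=-\gamma$.

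I do not expect a genuinely hard step: the proof is bookkeeping once the dictionary is in place. The points that need care are (i) recording that $N_x$ and $N_x^{\gamma}$ can differ only in the single root $-x\gamma$, so that $\ell_\gamma$ and $\ell$ differ by at most $1$, and (ii) the observation --- immediate from the definition of $\Gamma_W$ --- that a Bruhat relation $v>w$ with $\ell(v)=\ell(w)+1$ is realized by one edge and hence automatically writes $v=s_\ga w$ as a cover of $w$. Everything else is forced by the displayed chain.
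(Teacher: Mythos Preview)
Your proof is correct and follows essentially the same approach as the paper's: both set up the chain of inequalities $\ell(v)-1\le\ell_\gamma(v)=\ell_\gamma(w)\le\ell(w)\le\ell(v)-1$ using that $N_x^\gamma$ and $N_x$ differ by at most one element, and then read off the three conclusions from the forced equalities. Your write-up is slightly more explicit about the ``dictionary'' $N_x\setminus N_x^\gamma=\{-x\gamma\}$ or $\varnothing$, but the argument is the same.
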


\begin{proof}  The converse follows by definition.  Therefore, suppose $\ell_{\gamma}(w) = \ell_{\gamma}(v)$.  Since $\ord \h = \ord {\Phi^{+}} - 1$ we have inequality $\ell(v) - 1 \le \ell_{\gamma}(v) = \ell_{\gamma}(w) < \ell(v)$, which implies $\ell_{\gamma}(v) = \ell(v) -1$.  Therefore, there exists a $\gb \in N_{v}$ such that $v^{-1}\gb = -\gamma.$  Further, the equality $\ell_{\gamma}(v) = \ell_{\gamma}(w)$ forces equality in $\ell_{\gamma}(v) = \ell(v) -1 \ge \ell(w) \ge \ell_{\gamma}(w)$. Hence, $\ell(v) = \ell(w) +1$, \textit{i.e.} $v$ is a cover of $w$ and $N_{w} = N_{w}^{\gamma}$.
\end{proof}

This Lemma with Proposition \ref{h roots} identifies an inversion $\gb \in N_{w}^{\gamma}\cap N_{v}$ such that $v^{-1}\gb = -\gamma$.  For a fixed $\gb$, the $v$ of Lemma \ref{gamma len} is unique.

\begin{corollary}\label{unique}  Suppose $v> w$  and $\gb \in N_{w}^{\gamma} \cap N_{v}$.  If $\ell_{\gamma}(w) = \ell_{\gamma}(v)$ and $v^{-1}\gb = -\gamma$, then $v$ is unique. 
\end{corollary}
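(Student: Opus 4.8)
The plan is to show that, with $w$ and $\gb$ held fixed, the stated hypotheses force the positive root $\ga$ for which $v = s_\ga w$, and hence force $v$ itself. The first step is to reduce to this: since $v > w$ and $\ell_\gamma(w) = \ell_\gamma(v)$, Lemma~\ref{gamma len} tells us $v$ is a cover of $w$, so we may write $v = s_\ga w$ for a (necessarily unique) $\ga \in \Phi^{+}$, and moreover $N_w = N_w^{\gamma}$. Thus it suffices to prove that the reflecting root $\ga$ is determined by the data $w,\gb,\gamma$ alone.

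Next I would translate the two remaining hypotheses into linear constraints on $\ga$. The equation $v^{-1}\gb = -\gamma$ is the same as $w^{-1}s_\ga\gb = -\gamma$, i.e. $s_\ga\gb = -w\gamma$; expanding the reflection via $s_\ga\gb = \gb - c_{\ga\gb}\ga$ turns this into $c_{\ga\gb}\,\ga = \gb + w\gamma$. Meanwhile the hypothesis $\gb \in N_w^{\gamma}$ says $w^{-1}\gb \in -\h_\gamma = \Phi^{-}\setminus\{-\gamma\}$, so in particular $w^{-1}\gb \ne -\gamma$, which gives $\gb + w\gamma \ne 0$. Hence $c_{\ga\gb}\ne 0$ and $\ga$ is a nonzero real multiple of the fixed vector $\gb + w\gamma$.

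Finally I would invoke the first root-system axiom, $\R\ga \cap \Phi = \{\pm\ga\}$: the line $\R(\gb + w\gamma)$ meets $\Phi$ in a single pair $\{\pm\ga\}$, so it contains exactly one positive root, namely $\ga$. Therefore $\ga$, and with it $v = s_\ga w$, is uniquely determined. The one point requiring care is the use of the hypothesis $\gb \in N_w^{\gamma}$ rather than merely $\gb \in N_w$: this is precisely what excludes $\gb = -w\gamma$ and so keeps $\ga$ pinned to an honest line. Without it one could reflect $w$ in any root orthogonal to $\gb$ to manufacture a cover $v$ with $v^{-1}\gb = -\gamma$, and uniqueness would fail; everything else is a short computation with reflections.
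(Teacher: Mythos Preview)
Your argument is correct. The paper, being an extended abstract, states this corollary without proof; the surrounding text only indicates that it follows from Lemma~\ref{gamma len} once $\gb$ is held fixed. Your proposal carries out exactly the natural computation one would expect: reduce via Lemma~\ref{gamma len} to $v=s_\ga w$ a cover, rewrite $v^{-1}\gb=-\gamma$ as $c_{\ga\gb}\,\ga=\gb+w\gamma$, use $\gb\in N_w^{\gamma}$ to ensure the right-hand side is nonzero, and invoke axiom~(1) of a root system to pin down the positive root $\ga$. The observation that the hypothesis $\gb\in N_w^{\gamma}$ (rather than merely $\gb\in N_w$) is precisely what rules out $\gb=-w\gamma$ is a nice touch and makes clear why the corollary is stated as it is.
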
 

We are now ready to state the main Theorem of this paper.   

\begin{theorem}\label{BIG} These exist $\R[\Delta]$-module divided difference operators $\partial_{i}^{\gamma}: H^{*}_{T}(\h_{\gamma}) \longrightarrow H^{*}_{T}(\h_{\gamma})$ and a family of flow-up classes $\left \{\mc P^{w}\right \}_{w \in W}$ such that

\begin{displaymath}
\partial_{i}^{\gamma}\mc P^{w} = 
\begin{cases}
\mc P^{s_{i}w}	&\textrm{if}\; s_{i}w < w;\\
0			&\textrm{if}\; s_{i}w> w.
\end{cases}
\end{displaymath}
Further, if $w = s_{i_{1}} \cdots s_{i_{n}}$ is any reduced expression for $w \in W$, then the operator $\partial_{w} := \partial_{i_{1}} \cdots \partial_{i_{n}}$ is well-defined.  In other words, if $w = s_{j_{1}}\cdots s_{j_{n}}$ is another reduced expression for $w$, then 
\begin{displaymath}
\partial_{i_{1}} \cdots \partial_{i_{n}} = \partial_{j_{1}} \cdots \partial_{j_{n}}.
\end{displaymath}
\end{theorem}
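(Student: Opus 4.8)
The plan is to build a compatible family $\{\mc P^{w}\}_{w\in W}$ of flow-up classes by downward recursion on Bruhat order and then to \emph{define} $\partial_{i}^{\gamma}$ as the $\R[\Delta]$-linear endomorphism of $\HG$ sending $\mc P^{w}$ to $\mc P^{s_{i}w}$ when $s_{i}w<w$ and to $0$ when $s_{i}w>w$; the displayed identity then holds by construction as soon as the family is shown to be an $\R[\Delta]$-basis, and the well-definedness of $\partial_{w}$ will be a short combinatorial consequence. The recursion is anchored at the top class, which is canonical: since $w_{0}$ is maximal in $<_{\gamma}$, condition (2) of Definition \ref{flow} forces any flow-up class at $w_{0}$ to vanish off $w_{0}$, and since $w_{0}\gamma=-\gamma$ the unique edge at $w_{0}$ lost in passing from $\Gamma_{W}$ to $\Gamma_{\h_{\gamma}}$ is the one labelled $\gamma$, so $\mc P^{w_{0}}$ must be the function supported at $w_{0}$ with value $\prod_{\gb\in\h_{\gamma}}\gb$, which one checks lies in $\HG$.

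For the recursive step I would produce, from $\mc P^{w}$ and a simple reflection $s_{i}$ with $s_{i}w<w$, a flow-up class $\mc P^{v}$ at $v:=s_{i}w$ of degree $\ell_{\gamma}(v)$, modelled on the classical divided difference $\ga_{i}^{-1}(\mc P^{w}-s_{i}\cdot\mc P^{w})$ but corrected for the edges that $\Gamma_{\h_{\gamma}}$ is missing. The structure of the correction is dictated by Proposition \ref{v flow}, whose four cases say exactly what the value of any flow-up class at $v$ must be at each Bruhat cover of $v$, and by Lemma \ref{gamma len} and Corollary \ref{v length}, which govern how $\ell_{\gamma}$ can change across a cover (in particular whether $\partial_{i}^{\gamma}$ lowers degree); feeding these in as constraints, and propagating upward through $\Gamma_{\h_{\gamma}}$ by $\ell_{\gamma}$ using the GKM congruences, one obtains $\mc P^{v}$ and verifies directly that it is a flow-up class at $v$ lying in $\HG$ (the GKM conditions being inherited from those of $\mc P^{w}$ via $s_{i}f\equiv f\pmod{\ga_{i}}$ and the $W$-stability of $\HG$). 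Here one must confront the fact that simple-reflection edges $\{u,s_{i}u\}$ really do get deleted --- precisely those with $u^{-1}\ga_{i}=\gamma$ --- so that the uncorrected formula need not be a well-defined element of $\HG$ at all.

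The crux is that this recursion is \emph{consistent}: along any descending chain $w_{0}=x_{0}>x_{1}>\cdots>x_{k}$ of Bruhat covers, each given by a simple reflection, the resulting class depends only on $x_{k}$, not on the chain. I would deduce this from the braid relations for the descent operation, $\underbrace{\partial_{i}^{\gamma}\partial_{j}^{\gamma}\partial_{i}^{\gamma}\cdots}_{m_{ij}}=\underbrace{\partial_{j}^{\gamma}\partial_{i}^{\gamma}\partial_{j}^{\gamma}\cdots}_{m_{ij}}$: by Matsumoto's theorem (together with the strong exchange property, which shows that any reduced word for $x_{k}w_{0}$ does realise a descending chain from $w_{0}$ to $x_{k}$) this reduces the whole consistency statement to a computation inside each rank-$2$ parabolic, where the highest-root combinatorics of Section \ref{root} --- above all Corollary \ref{unique}, which pins down uniquely the vertex above $w$ carrying a prescribed deleted inversion $v^{-1}\gb=-\gamma$ --- shows that the constraints of Proposition \ref{v flow} determining the descended class do not see the chosen cover. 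I expect this braid-relation/consistency step to be the heart of the argument and its main obstacle, both because of the deleted simple edges just mentioned and because flow-up classes are genuinely non-unique (Figure \ref{unun}), so that we are not merely verifying a canonical object but must check that the descent makes consistent choices. Granting consistency, $\{\mc P^{w}\}$ is an unambiguous family of flow-up classes of degrees $\ell_{\gamma}(w)$, hence an $\R[\Delta]$-basis since its graded ranks match those of $\HG$ supplied by Theorem \ref{Geo}, and the operators $\partial_{i}^{\gamma}$ defined above make sense and satisfy the displayed formula.

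Finally, the well-definedness of $\partial_{w}=\partial_{i_{1}}\cdots\partial_{i_{n}}$ follows at once: applied to a basis element $\mc P^{v}$ and iterated, the composite equals $\mc P^{s_{i_{1}}\cdots s_{i_{n}}v}=\mc P^{wv}$ when the chain $v>s_{i_{n}}v>s_{i_{n-1}}s_{i_{n}}v>\cdots>s_{i_{1}}\cdots s_{i_{n}}v$ strictly decreases in length and equals $0$ otherwise, and by the strong exchange property this chain strictly decreases exactly when $\ell(wv)=\ell(v)-n$, a condition on $w$ and $v$ alone. Hence $\partial_{i_{1}}\cdots\partial_{i_{n}}$ and $\partial_{j_{1}}\cdots\partial_{j_{n}}$ agree on every basis element and so agree as operators.
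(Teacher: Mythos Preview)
Your global architecture---downward induction from $w_{\circ}$, produce each lower flow-up class from a higher one by a corrected divided difference, verify consistency of the descent, then read off braid relations on the resulting basis---is exactly the scaffold the paper uses, and your final paragraph on $\partial_{w}$ matches the paper's last argument almost verbatim.

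There are, however, two substantive points where your proposal diverges from the paper and where I do not think your sketch would close on its own.

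First, you never say what the correction is. The paper does not leave this implicit: it writes down an explicit formula (Definition~\ref{DDH}),
\[
\partial_{i}^{\gamma}\mc P^{w}=\dfrac{\mc P^{w}-s_{i}\cdot\mc P^{w}+c_{\ga\ga_{i}}\bigl(\mc P^{v}-\mc P^{s_{i}v}\bigr)}{\ga_{i}},
\]
where $v$ is the unique cover of $w$ with $\ell_{\gamma}(v)=\ell_{\gamma}(w)$ and $v^{-1}\ga_{i}=-\gamma$ (Corollary~\ref{unique}), and then proves the key technical result (Theorem~\ref{action}) computing $s_{i}\cdot\mc P^{w}$ in the inductively constructed basis; that is what forces the numerator to be divisible by $\ga_{i}$ and what shows the quotient is a flow-up class at $s_{i}w$. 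Your ``feed in Proposition~\ref{v flow} as constraints and propagate by GKM'' does not substitute for this: case~(4) of Proposition~\ref{v flow} leaves an undetermined factor $f$, so those constraints alone do not pin the class down.

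Second, your proposed consistency proof is not the paper's, and the reduction you invoke is suspect. The paper does \emph{not} check braid relations to get consistency. Instead it proves Proposition~\ref{p at v}, a path-independent closed formula for $\mc P(v)$ at every Bruhat cover $v$ of $w$, and then runs a short triangularity argument (Corollary~\ref{equal}): if $\mc P$ and $\mc P'$ arise from two different descents to $w$, their difference lies in the span of $\{\mc P^{x}:x>_{\gamma}w,\ \ell_{\gamma}(x)\le\ell_{\gamma}(w)\}$, and evaluating at each such $x$ (which is a cover, by Lemma~\ref{gamma len}) kills every coefficient. Only \emph{after} this is the braid-relation statement proved, and it is proved by appealing back to Corollary~\ref{equal}. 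Your alternative---``reduce to a computation inside each rank-$2$ parabolic''---is exactly where I would expect trouble: the correction term above involves the globally defined highest root $\gamma$ and the auxiliary classes $\mc P^{v},\mc P^{s_{i}v}$, none of which localise to the rank-$2$ subsystem generated by $\ga_{i},\ga_{j}$, so the computation does not stay inside that parabolic in any obvious sense.
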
 

\subsection{Proof of Theorem \ref{BIG}}

In order to prove Theorem \ref{BIG} we give an explicit formula for the divided difference operator.  With this we work by induction on the length function $\ell(w)$ to define simultaneously the action of the simple reflection $s_{i} \cdot$ on the previously defined  flow-up classes AND define a new flow-up class satisfying Theorem \ref{BIG}.  

The base case of our induction is the longest element $w_{\circ} \in W$ (cf \cite[Theorem 1.8]{Hu-RG}) for which it is straightforward to define a flow-up class.  Since $N_{w_{\circ}} = \Phi^{+}$ it follows $N_{w_{\circ}}^{\h} = \h$, so $\mc P^{w_{\circ}}$ is the class whose value at $w_{\circ}$ is the product of the roots in $\h$ and $0$ otherwise.  Proceeding by induction, suppose for all $w \in W$ with $\ell(w) \ge k$ that flow-up classes satisfying Theorem \ref{BIG} have been defined.   

First a bit of notation, we say $s_{\ga}w \lessdot w$ if $s_\ga w < w$ in Bruhat order and the edge $s_{\ga}w \ed{} w$ has been deleted in $\Gamma_{\h}$, or in other words $w^{-1}\ga = -\gamma$.

\begin{definition}[Formula for $\partial_{i}^{\gamma}$]\label{DDH}  Let $w \in W$ with $\ell(w) = k$ and suppose  $\left \{\mc P^{u}\right \}_{\ell(u) \ge k}$ are flow-up classes in $\HG$. For each $s_{i} \in \Delta$ define the $i^{th}$ divided difference operator by

\begin{equation}\label{partial}
\partial_{i}^{\gamma}\mc P^{w} = 
\begin{cases}
s_{i}\cdot \mc P^{w}	&\text{if}\; s_{i}w \lessdot w;\\
\dfrac{\mc P^{w} - s_{i} \cdot \mc P^{w} + c_{\ga \ga_{i}} \left (\mc P^{v} -  \mc P^{s_{i}v}\right )}{\ga_{i}} &\text{if}\; s_{i}w < w;\\
0				&\text{if} \; s_{i}w > w,
\end{cases}
\end{equation}
where $c_{\ga \ga_{i}}$ is the Cartan integer of $s_{\ga}(\ga_{i})$, and when $v \in W$ exists it is the unique cover of $w$ such that $\ell_{\gamma}(w) = \ell_{\gamma}(v)$ and $v^{-1}\ga_{i} = -\gamma$. 
\end{definition}

\begin{example}  For the type $A_{2}$ root system the highest root set is $\h = \Delta$.  The family of flow-up classes constructed by Definition \ref{DDH} is described in Table \ref{eg}.  The reader is encouraged to replicate this data, for guidance $\Gamma_{\Delta}$ is given in Figure \ref{mgraphs}.  

\begin{table}[ht]
\label{eg}
$$\begin{array}{|c|c|c|c|c|c|c|}
\hline
\mc P^{v}(w) &w =  e & s_{1} & s_{2}& s_{1}s_{2} & s_{2}s_{1} & s_{1}s_{2}s_{1}\\
\hline
\mc P^{e} & 1 & 1 & 1 & 1 & 1 & 1\\
\hline
\mc P^{s_{1}}& 0 & \mathsf{t_{1}- t_{2}} & 0 & \mathsf{t_{3} - t_{2}} & 0 & 0\\
\hline 
\mc P^{s_{2}}& 0 & 0 &\mathsf{t_{2}- t_{3}} & 0 & \mathsf{t_{2} - t_{1}} & 0\\
\hline
\mc P^{s_{1}s_{2}}& 0 & 0 &0 & \mathsf{t_{1} - t_{3}} & 0 & \mathsf{t_{1}-t_{2}}\\
\hline
\mc P^{s_{2}s_{1}}& 0 & 0 &0 & 0 & \mathsf{t_{1} - t_{3}} & \mathsf{t_{2}-t_{3}}\\
\hline
\mc P^{s_{1}s_{2}s_{1}}& 0 & 0 &0 & 0 & 0 & (\mathsf{t_{1}-t_{2}})(\mathsf{t_{2}-t_{3}})\\
\hline
\end{array}$$
\caption{The family of flow-up classes for $\h = \Delta$ in type $A_{2}$.}
\end{table}
\end{example}

It is not  obvious that $\partial_{i}^{\gamma}$ is a $\R[\Delta]$-module homorphism, this is a consequence of the next theorem.  We prove this in \cite{Te-HRDD}, its proof requires a careful case-by-case analysis.   

\begin{theorem}\label{action}  Suppose Theorem \ref{BIG} has uniquely determined $\mc P^{w}$ for $w \in W$ with $\ell(w) \ge k$, then  

\begin{displaymath}s_{i}\cdot \mc P^{w} = 
\begin{cases}
\mc P 		&\text{if} \: s_{i}w \lessdot w \: \text{or} \: s_{i}w \gtrdot w\\
\mc P^{w}				&\text{if} \: s_{i}w > w\\
\mc P^{w} - \ga_{i}\mc P + c_{\ga \ga_{i}}(\mc P^{v} - \mc P^{s_{i}v})	
				&\text{if} \:  s_{i}w < w
\end{cases}
\end{displaymath}
where $\mc P$ is a flow-up class at $s_{i}w$  and when $v \in W$ exists it is the unique cover of $w$ such that $\ell_{\gamma}(w) = \ell_{\gamma}(v)$ and $v^{-1}\ga_{i} = -\gamma$.  
\end{theorem}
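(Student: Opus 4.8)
The plan is to prove Theorem~\ref{action} inside the downward induction on $\ell(w)$ that drives Theorem~\ref{BIG}: given the flow-up classes $\{\mc P^u\}_{\ell(u)\ge k}$, I would compute $s_i\cdot\mc P^w$ for $\ell(w)=k$ one vertex at a time, starting from the identity $(s_i\cdot\mc P^w)(u)=s_i(\mc P^w(s_iu))$, which expresses that $s_i\cdot$ swaps the polynomials at the two ends of each (surviving or deleted) $\ga_i$-edge and permutes the roots. Since $H^*_T(\h_\gamma)$ is $W$-stable, $s_i\cdot\mc P^w$ is automatically a GKM class, and so is each right-hand side in the statement, so it suffices to match values. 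I would split the argument into four cases according to whether $s_iw>w$ or $s_iw<w$ and whether the $\ga_i$-edge joining $w$ and $s_iw$ survives in $\Gamma_{\h_\gamma}$ --- equivalently, whether $w^{-1}\ga_i=\pm\gamma$ --- since these are exactly the alternatives in the statement.

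In the two deleted-edge cases ($s_iw\lessdot w$ with $w^{-1}\ga_i=-\gamma$, and $s_iw\gtrdot w$ with $w^{-1}\ga_i=\gamma$) I would show directly that $s_i\cdot\mc P^w$ is a flow-up class at $s_iw$. For the value at $s_iw$ one computes
\[
(s_i\cdot\mc P^w)(s_iw)=s_i(\mc P^w(w))=s_i\prod_{\gb\in N^\gamma_w}\gb=\prod_{\gb\in N^\gamma_{s_iw}}\gb,
\]
the last equality following from Corollary~\ref{simple} applied to the cover $s_{\ga_i}$ relating $w$ and $s_iw$; this product is precisely the value required of a flow-up class at $s_iw$ by Definition~\ref{flow}(1). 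For the support condition, $(s_i\cdot\mc P^w)(u)\ne0$ forces $\mc P^w(s_iu)\ne0$, hence $s_iu\ge_\gamma w$, and I would pass to $u\ge_\gamma s_iw$ using the lifting property of Bruhat order together with Proposition~\ref{h roots} and Corollary~\ref{simple}, which control which $\ga_i$-edges of $\Gamma_{\h_\gamma}$ are present. The third case, $s_iw>w$ with the edge present, is the assertion $s_i\cdot\mc P^w=\mc P^w$; here I would argue vertex by vertex, using the GKM condition across the surviving $\ga_i$-edges, Proposition~\ref{v flow} for the values of $\mc P^w$ at covers, and the induction hypothesis for deeper vertices, to force $\mc P^w(u)$ and $\mc P^w(s_iu)$ to be related by $s_i$ in exactly the way that gives $s_i(\mc P^w(s_iu))=\mc P^w(u)$.

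The substance of the theorem, and the step I expect to be the main obstacle, is the fourth case: $s_iw<w$ with the $\ga_i$-edge present. Writing $v=s_\ga w$ for the cover of $w$ furnished by Lemma~\ref{gamma len} (when it exists) and $c_{\ga\ga_i}$ for its Cartan integer --- and noting that $v^{-1}\ga_i=-\gamma\in\Phi^-$ forces $s_iv<v$, so that $\ell(s_iv)=k$ and $\mc P^{s_iv}$ is among the given classes --- I would show that
\[
\mc P:=\frac{\mc P^w-s_i\cdot\mc P^w+c_{\ga\ga_i}(\mc P^v-\mc P^{s_iv})}{\ga_i}
\]
(with the last summand omitted when no such $v$ exists) is a genuine flow-up class at $s_iw$. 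Granting this, Theorem~\ref{action} is the rearrangement of the displayed identity, and via Definition~\ref{DDH} it gives $\partial_i^\gamma\mc P^w=\mc P=\mc P^{s_iw}$, which extends the flow-up family to length $k-1$ and, extended $\R[\Delta]$-linearly, makes $\partial_i^\gamma$ a well-defined module map.

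To carry out this fourth case one must check that the numerator above is divisible by $\ga_i$ at every vertex, and that $\mc P$ has the correct leading product at $s_iw$ and vanishes off the flow-up of $s_iw$. At a vertex $u$ whose $\ga_i$-edge is present, divisibility is the classical computation: the GKM condition gives $\mc P^w(u)-\mc P^w(s_iu)\in\langle\ga_i\rangle$, and since $p-s_i(p)\in\langle\ga_i\rangle$ for every $p\in\R[\Delta]$ one gets $(\mc P^w-s_i\cdot\mc P^w)(u)\in\langle\ga_i\rangle$, the correction term contributing nothing there. The genuinely new point is at the vertices $u$ with $u^{-1}\ga_i=\pm\gamma$, where the $\ga_i$-edge has been deleted and this argument fails; there the term $c_{\ga\ga_i}(\mc P^v-\mc P^{s_iv})$ is designed precisely to restore both the divisibility by $\ga_i$ and the flow-up property. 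Verifying that the Cartan integer is exactly the right coefficient is where Lemma~\ref{gamma len}, the uniqueness of $v$ from Corollary~\ref{unique}, and the refined values at covers --- in particular the congruence modulo $\langle\ga\rangle$ in Proposition~\ref{v flow}(4) --- all enter, and it is this final, intricate case-by-case bookkeeping that is carried out in \cite{Te-HRDD}.
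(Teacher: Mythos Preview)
The paper does not prove Theorem~\ref{action} here; immediately before the statement it says ``We prove this in \cite{Te-HRDD}, its proof requires a careful case-by-case analysis,'' and nothing further is supplied. Your outline is exactly such a case-by-case analysis, organized around the correct ingredients (Corollary~\ref{simple} for the value at $s_iw$, the Diamond Lemma and Proposition~\ref{h roots} for support under left multiplication by $s_i$, Proposition~\ref{v flow} and Lemma~\ref{gamma len}/Corollary~\ref{unique} for the correction term), and you correctly isolate the fourth case---divisibility and the flow-up property at vertices with a deleted $\ga_i$-edge---as the heart of the matter, deferring its completion to \cite{Te-HRDD} just as the paper does. So your proposal is fully aligned with the paper's own treatment; there is no alternative proof in the paper to compare against.
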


This provides the inductive step to Theorem \ref{BIG}.  The consequence is that the class $\mc P$ is a new flow-up class at $s_{i}w$ where $\ell(s_{i}w) = k-1$.  Repeating this process for all $w'$ with $\ell(w') = k-1$ proves the induction.  In fact this process uniquely defines a flow-up class at $s_{i}w$.  Before proving the uniqueness we show $\partial_{i}^{\gamma}$ is a module map.  Since, $\R[\Delta]$ is a UFD over $\R[\ga_{1}, \cdots, \widehat\ga_{i}, \cdots, \ga_{k}]$, where $\widehat\ga_{i}$ means $\ga_{i}$ is removed, dividing by $\ga_{i}$ is well-defined.  Therefore, in the third case of Theorem \ref{action} there exists a well-defined flow-up class such that
\begin{displaymath}\partial_{i}^{\gamma}\mc P^{w} := \dfrac{\mc P^{w} - s_{i}\cdot \mc P^{w} + c_{\ga \ga_{i}}\left (\mc P^{v} - \mc P^{s_{i}v}\right)}{\ga_{i}}= \mc P.\end{displaymath} 

This proves

\begin{corollary} The divided difference operator
 \begin{math}\partial_{i}^{\gamma}: \HG \longrightarrow \HG\end{math} 
 is a $\R[\Delta]$-module homorphism.
\end{corollary}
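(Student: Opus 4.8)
The plan is to treat the corollary as a formal consequence of Theorem \ref{action}, once one observes that a $\R[\Delta]$-module homomorphism out of $\HG$ is the same data as an arbitrary set-map on a free $\R[\Delta]$-basis, extended $\R[\Delta]$-linearly. The family $\{\mc P^{w}\}_{w \in W}$ produced by Definition \ref{DDH} is such a basis: it consists of one flow-up class per Weyl group element, and the triangularity condition (2) of Definition \ref{flow} together with the nonvanishing condition (1) forces it to be a $\R[\Delta]$-basis of $\HG$ (this is exactly the situation of Theorem \ref{Geo}). Consequently, to show that $\partial_{i}^{\gamma}$ extends to a $\R[\Delta]$-module endomorphism of $\HG$, the only thing to verify is that for each $w$ the element $\partial_{i}^{\gamma}\mc P^{w}$ prescribed by Equation (\ref{partial}) is a well-defined member of $\HG$ (and not merely a map $W \to \textrm{Frac}\,\R[\Delta]$); the $\R[\Delta]$-linearity of the extension is then automatic from the universal property of a free module.

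First I would dispose of the two easy branches of (\ref{partial}). When $s_{i}w > w$ the value is $0 \in \HG$, and when $s_{i}w \lessdot w$ the value is $s_{i}\cdot\mc P^{w}$, which lies in $\HG$ because $\HG$ is $W$-stable under the action of Equation (\ref{dot}) (the $W$-stability proposition). Neither case requires anything further.

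The substantive branch is $s_{i}w < w$, where $\partial_{i}^{\gamma}\mc P^{w} = \ga_{i}^{-1}\big(\mc P^{w} - s_{i}\cdot\mc P^{w} + c_{\ga\ga_{i}}(\mc P^{v} - \mc P^{s_{i}v})\big)$, the cover $v$ being the unique one with $\ell_{\gamma}(w) = \ell_{\gamma}(v)$ and $v^{-1}\ga_{i} = -\gamma$ when such a cover exists (existence and uniqueness come from Lemma \ref{gamma len} and Corollary \ref{unique}, so the right-hand side is unambiguous). Here I would invoke Theorem \ref{action}: it identifies the numerator as precisely $\ga_{i}\mc P$ for a flow-up class $\mc P$ at $s_{i}w$. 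Since $\ga_{i}$ is a prime element of the UFD $\R[\Delta]$ — indeed $\R[\Delta]$ is polynomial, hence free, over $\R[\ga_{1},\ldots,\widehat{\ga_{i}},\ldots,\ga_{k}]$ — dividing by $\ga_{i}$ an element of $\R[\Delta]$ that it divides is well-defined and stays in $\R[\Delta]$; applying this vertex by vertex shows $\partial_{i}^{\gamma}\mc P^{w} = \mc P \in \HG$.

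With each basis element $\mc P^{w}$ sent into $\HG$, the $\R[\Delta]$-linear extension is a well-defined $\R[\Delta]$-module endomorphism of $\HG$, which is the assertion. The only genuinely hard input is Theorem \ref{action} itself — the inductive, case-by-case verification that the combination $\mc P^{w} - s_{i}\cdot\mc P^{w} + c_{\ga\ga_{i}}(\mc P^{v} - \mc P^{s_{i}v})$ is divisible by $\ga_{i}$ and that the quotient is again a flow-up class at $s_{i}w$. Granting that result (carried out in \cite{Te-HRDD}), the module-homomorphism property is purely formal, and I would expect the write-up of this corollary to be a short paragraph rather than a real argument.
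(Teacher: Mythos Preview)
Your proposal is correct and follows essentially the same line as the paper: both treat the corollary as an immediate consequence of Theorem \ref{action}, using that the numerator in the third branch of Equation (\ref{partial}) equals $\ga_{i}\mc P$ for a flow-up class $\mc P$, together with the UFD property of $\R[\Delta]$ to make division by $\ga_{i}$ well-defined. Your write-up is slightly more explicit about extending $\R[\Delta]$-linearly from the free basis $\{\mc P^{w}\}$, but this is exactly what the paper has in mind.
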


The next is a technical Lemma we need frequently  (cf. \cite[Lemma 5.11]{Hu-RG}).  It is important because it says that left multiplication by a simple transposition $s_{i}$ preserves the flow-up, \textit{i.e.} if $v$ is a cover of $w$, and $w \ed{} s_{i}w$ if and only if  $s_{i}v$ is a cover of $s_{i}w$.

\begin{lemma}[Diamond Lemma]\label{diamond}  Let $v$ be a cover of $w$.  Suppose $\ell(s_{i}w) = \ell(w) + 1 = \ell(v)$ and $s_{i}w \ne v$, then both $s_{i}v > v$ and $\ell(s_{i}v) = \ell(s_{i}w) + 1$.  Further, $w \ed{} v$ is in $\Gamma_\h$ if and only if $s_{i}w \ed{} s_{i}v$ is in $\Gamma_\h$.  
\end{lemma}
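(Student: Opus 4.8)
The plan is to split the Diamond Lemma into its order-theoretic part — that $s_i v$ is again a cover of $s_i w$ with $\ell(s_iv)=\ell(s_iw)+1$ — and the refinement about which edges are retained in $\Gamma_\h$, and to base the first part entirely on the standard lifting property of Bruhat order.

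The single instance of the lifting property I will need is: \emph{if $x<y$, $s$ is a simple reflection with $sx>x$ and $sy<y$, then $sx\le y$} (proved by the usual subword argument: $y$ has a reduced word beginning with $s$, a reduced subword for $x$ cannot use that initial $s$ because $sx>x$, hence $x\le sy$, and prepending $s$ to a reduced word for $sy$ refined from one for $x$ gives $sx\le y$; cf. \cite{Hu-RG}). First I would rule out $s_i v<v$: assuming it, apply the implication with $x=w$, $y=v$, $s=s_i$ (legitimate since $s_i w>w$ and $s_i v<v$) to get $s_i w\le v$; but $\ell(s_i w)=\ell(w)+1=\ell(v)$ forces $s_i w=v$, contradicting $s_i w\ne v$. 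Hence $s_i v>v$ and $\ell(s_i v)=\ell(v)+1=\ell(w)+2=\ell(s_i w)+1$. Then apply the same implication with $x=w$, $y=s_i v$, $s=s_i$ (legitimate since $s_i w>w$ and $s_i(s_i v)=v<s_i v$) to get $s_i w\le s_i v$; together with the length count this is precisely the covering relation $s_i w\ed{}s_i v$ in $\Gamma_W$, exhibiting $s_i v$ as a cover of $s_i w$.

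For the statement about $\Gamma_\h$, write $v=s_\gb w$ with $\gb\in\Phi^{+}$; since $v$ is a cover of $w$ one has $w^{-1}\gb\in\Phi^{+}$, and by Definition \ref{hdef} the edge $w\ed{}v$ lies in $\Gamma_\h$ exactly when $v^{-1}\gb=-w^{-1}\gb\in-\h$, i.e. when $w^{-1}\gb\in\h$. Since $s_i w\ne v$ we must have $\gb\ne\ga_i$, so $\gb':=s_i\gb\in\Phi^{+}$ and $s_i v=s_i s_\gb w=s_{\gb'}(s_i w)$; by the first part $s_i v$ is a cover of $s_i w$, so $(s_i w)^{-1}\gb'\in\Phi^{+}$ and the edge $s_i w\ed{}s_i v$ lies in $\Gamma_\h$ exactly when $(s_i w)^{-1}\gb'\in\h$. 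The computation $(s_i w)^{-1}\gb'=w^{-1}s_i s_i\gb=w^{-1}\gb$ shows the two membership conditions coincide, so one edge survives in $\Gamma_\h$ iff the other does.

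The step I expect to be the main obstacle is marshalling the lifting property cleanly — verifying that its hypotheses genuinely hold at each of the two applications, and not conflating $\le$ with the covering relation until the length bookkeeping is in place — together with remembering to exclude the degenerate case $\gb=\ga_i$, which is the only point where the hypothesis $s_i w\ne v$ enters the second part. An alternative that sidesteps the lifting property would be to track $N_w$, $N_v$, $N_{s_iw}$, $N_{s_iv}$ directly through Proposition \ref{inv}, but I expect that to be longer and more delicate.
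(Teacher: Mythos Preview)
Your argument is correct. The paper does not actually prove this lemma: for the Bruhat-order half it simply cites \cite[Lemma~5.11]{Hu-RG}, and the $\Gamma_\h$ refinement is stated without justification. Your use of the lifting property is exactly the standard route (and is what underlies the cited lemma in Humphreys), and your computation $(s_iw)^{-1}(s_i\gb)=w^{-1}\gb$ is the clean way to handle the Hessenberg part that the paper leaves implicit. One cosmetic point: in the second application of lifting you should record that $w<s_iv$ (which is immediate from $w<v<s_iv$), since that is part of the hypothesis of the property; otherwise nothing is missing.
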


Next, we prove the flow-up class $\mc P$ defined in Theorem \ref{BIG} is uniquely determined.  This requires a new induction, which again our base case is $w_{\circ}$ which is uniquely defined.  Suppose by induction that if $\ell(v) > k$ that the flow-up classes are uniquely determined, and let $\ell(w) = k$.  This next Proposition determines the polynomials at all the covers of $w$ for the flow-up class $\mc P$ defined in Definition \ref{DDH}.  We include the proof as an example of how to prove these results.   

\begin{proposition}\label{p at v}  Suppose $\mc P$ is a flow-up classes at $w \in W$ defined by Definition \ref{DDH}, \textit{i.e.} $\mc P = \partial_{i}^{\gamma}\mc P^{s_{i}w}$ for $\ell(s_{i}w) = \ell(w)+1$.  Whenever $v$ covers $w$, then 
 \begin{equation}\label{v val}
 \mc P(v) = 
 \begin{cases}
 \displaystyle{s_{\ga}\mu \prod _{\gb \in N_{v}^{\gamma} \setminus \left\{\ga\right\}} \gb}  &\text{if} \; \ga \in N^{\gamma}_{v},\\
 0											&\textrm{if} \; \ga \not\in N^{\gamma}_{v}
 \end{cases}
\end{equation}
where $\mu \in (N_{w}^{\gamma} \cap N_{v}) \setminus N_{v}^{\gamma}$ or $\mu = 1$ otherwise. 
\end{proposition}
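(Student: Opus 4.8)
The plan is to compute $\mc P(v)$ directly from Definition \ref{DDH} by tracking which branch of the piecewise formula applies, using Proposition \ref{v flow} as the template for the structure of the answer and Proposition \ref{h roots} together with the Diamond Lemma to control how $\h$-inversions move under the relevant reflections. Write $s_i w \gtrdot w$, so that $\mc P = \partial_i^\gamma \mc P^{s_i w}$, and $v$ is a cover of $w$ via $v = s_\beta w$ for some $\beta \in N_v$. The first step is to dispose of the case $\ga \notin N_v^\gamma$: here the edge $w \ed{} v$ is deleted in $\Gamma_{\h_\gamma}$, i.e.\ $v^{-1}\ga = -\gamma$, and since $\mc P$ is a flow-up class at $w$ Proposition \ref{v flow}(1) gives $\mc P(v) = 0$ immediately. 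So the substance is the case $\ga \in N_v^\gamma$, where I would split according to whether $|N_v^\gamma| = |N_w^\gamma| + 1$ (Proposition \ref{v flow}(3), forcing $\mu = 1$ and matching the empty product convention) or $|N_v^\gamma| \le |N_w^\gamma|$ (Proposition \ref{v flow}(4), where the correction factor $f$ enters).

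In the harder subcase I expect exactly one inversion $\mu \in (N_w^\gamma \cap N_v) \setminus N_v^\gamma$ to be ``lost'': by Corollary \ref{v length} the drop in $|N^\gamma_\bullet|$ is at most one, so $f$ from Proposition \ref{v flow}(4) has degree at most one, and by Lemma \ref{gamma len} applied to the pair $w < v$ the equality $\ell_\gamma(w) = \ell_\gamma(v)$ occurs precisely when there is a $\mu \in N_v$ with $v^{-1}\mu = -\gamma$, i.e.\ when $\mu$ is exactly the inversion whose image under $v^{-1}$ is the highest root. The job is then to pin down $f$ as $s_\ga \mu$ rather than merely $f \equiv \mu \pmod{\langle \ga \rangle}$. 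This is where I would use the explicit formula for $\mc P^{s_i w}$ and the action formula of Theorem \ref{action}: writing out $\mc P = \partial_i^\gamma \mc P^{s_i w} = (\mc P^{s_i w} - s_i \cdot \mc P^{s_i w} + c_{\ga\ga_i}(\mc P^{v'} - \mc P^{s_i v'}))/\ga_i$ (with $v'$ the unique cover from Corollary \ref{unique}, when it exists), evaluate both sides at $v$, and use that $\mc P^{s_i w}(s_i v)$ and $\mc P^{s_i w}(v)$ are known by the outer induction hypothesis and the Diamond Lemma relating the cover $v$ of $w$ to the cover $s_i v$ of $s_i w$. The reflection $s_\ga$ appears because the GKM condition across the $s_\ga$-edge at $v$ (combined with Proposition \ref{h roots}(1), which says the positive $\h$-inversions of $v$ correspond under $s_\ga$ to those of $w$) forces the leading coefficient to be the $s_\ga$-image of the corresponding coefficient at $w$, namely $s_\ga(\mu)$ for the one ``missing'' root $\mu \in N_w^\gamma \cap N_v$, and $s_\ga$ of a product of roots in $N_v^\gamma \setminus \{\ga\}$ is that same product up to sign absorbed into the individual factors.

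The main obstacle I anticipate is the bookkeeping in identifying which single root $\mu$ is lost and verifying that the contribution of the correction term $c_{\ga\ga_i}(\mc P^{v'} - \mc P^{s_i v'})$ to $\mc P(v)$ is accounted for correctly --- in particular, one must check that either $v = v'$ (in which case the correction term cancels part of the main term and the degree-one factor survives with coefficient $s_\ga \mu$) or $v \ne v'$ (in which case, by uniqueness in Corollary \ref{unique} and the Diamond Lemma, $\mc P^{v'}(v) = \mc P^{s_i v'}(v) = 0$ or they cancel, so the correction term does not obstruct the formula). I would organize the final case analysis around the trichotomy $s_i w \lessdot w$ vs.\ $s_i w < w$ in Definition \ref{DDH}, but note that here $\ell(s_i w) = \ell(w) + 1 > \ell(w)$, so in fact $\mc P^{s_i w}$ is the object we are differentiating and the relevant subcases concern whether $s_i v$ lies above or below $v$; the Diamond Lemma (Lemma \ref{diamond}) is precisely the tool guaranteeing $s_i v \gtrdot v$ when $s_i w \ne v$, which is the generic situation. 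Once $f = s_\ga \mu$ is established on the nose, substituting back into Proposition \ref{v flow}(4) and absorbing signs yields exactly Equation (\ref{v val}), and the $\mu = 1$ convention handles the remaining degenerate cases uniformly.
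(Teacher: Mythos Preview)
Your plan is essentially the paper's proof: reduce via Proposition~\ref{v flow}(1)--(3) to the case $\ga\in N_v^\gamma$ with $\ell_\gamma(v)=\ell_\gamma(w)$, then run a descending induction on $\ell(w)$, use the Diamond Lemma to lift the cover $v$ of $w$ to a cover $s_iv$ of $s_iw$, evaluate the formula of Definition~\ref{DDH} at $v$, and read off $\mc P(v)$ from the inductive value $\mc P^{s_iw}(s_iv)$.

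Two points where your outline diverges from the paper and would need correction. First, your dichotomy ``$v=v'$ vs.\ $v\ne v'$'' is miscast: $v'$ (when it exists) is a cover of $s_iw$, so $\ell(v')=\ell(w)+2>\ell(v)$ and $v=v'$ is impossible. The actual issue is whether $s_iv'=v$; the paper disposes of this by noting it would force $v^{-1}$ to send two distinct positive roots to $-\gamma$, contradicting injectivity. With that handled, $\mc P^{v'}(v)=\mc P^{s_iv'}(v)=0$ simply because $v$ is not in either flow-up, so the correction term vanishes at $v$ outright rather than ``cancelling part of the main term.'' Second, your heuristic that the GKM condition across the $s_\ga$-edge pins down $f=s_\ga\mu$ is not how the factor arises in the paper: Proposition~\ref{v flow}(4) only gives $f\equiv\mu\pmod{\langle\ga\rangle}$, which does not determine $f$. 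Instead, the paper computes $\mc P^{s_iw}(s_iv)$ by the inductive hypothesis --- it equals $s_{s_i\ga}(s_i\mu)\prod_{\gb\in N_{s_iv}^\gamma\setminus\{s_i\ga\}}\gb$ --- and then applies $s_i$ (in the $s_iw\gtrdot w$ case) or divides by $-\ga_i$ (in the $s_iw>w$ case, using $N_{s_iv}^\gamma=\{\ga_i\}\cup s_iN_v^\gamma$ from Corollary~\ref{simple}) to obtain $s_\ga\mu$ on the nose. Once you reroute these two steps your argument coincides with the paper's.
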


\begin{proof}  When $\mu$ exists it is the root associated to the edge $s_{\mu}v \ed{} v$ missing in $\Gamma_{\h_\gamma}$.  Define the polynomial $q = s_{\ga}\mu \prod _{\gb \in N_{v}^{\gamma} \setminus \left\{\ga\right\}} \gb$. 
When $\ga \not\in N_{v}^{\gamma}$ or $\ell_{\gamma}(v) = \ell_{\gamma}(w) + 1$ (when $\mu$ does not exist) this is proved in Proposition \ref{v flow}(1)-(3).  

Therefore, we may assume $\ell_{\gamma}(v) = \ell_{\gamma}(w)$, $\ga \in N_{v}^{\gamma}$, and $\mu \in (N_{w}^{\gamma} \cap N_{v}) \setminus N_{v}^{\gamma}$ exists.  We work by induction, for $w = w_{\circ}$ there is nothing to prove.  Suppose by induction for $w' \in W$ with $\ell(w') > k$ the result is true, and let $w \in W$ with $\ell(w) = k$.  In this case, there exists a simple reflection $s_{i}$ so that $\ell(s_{i}w) = \ell(w) + 1$.  By Lemma \ref{diamond}, $s_{i}v > v$, and $s_{i}v = s_{s_{i}\ga}w$, therefore $\mc P^{s_{i}w}(s_{i}v)$ satisfies the inductive hypothesis.   
 
If $s_{i}w \gtrdot w$, then by Equation (\ref{partial}) $\mc P := s_{i} \cdot \mc P^{s_{i}w}$.  It follows from Corollary \ref{simple} that $N^{\gamma}_{s_{i}v} = s_{i}N^{\gamma}_{v}$, so deduce $\mc P^{s_{i}w}(s_{i}v) = s_{i}q$. This shows $\mc P(v) = s_{i}\mc P^{s_{i}w}(s_{i}v) = q$ as desired. 

If $s_{i}w > w$, Equation (\ref{partial}) gives $\ga_{i}\mc P =  \mc P^{s_{i}w} - s_{i} \cdot \mc P^{s_{i}w} + c_{\gb \ga_{i}}(\mc P ^{v'} - \mc P^{s_{i}v'})$ where $v'$ may or may not exist.  Evaluating both sides of this expression at $v$ we claim
\begin{displaymath}\ga_{i}\mc P(v) = -s_{i}\mc P^{s_{i}w}(s_{i}v).\end{displaymath}
  
To prove this first note $\mc P^{s_{i}w}(v) = \mc P^{v'}(v) = 0$ since $v$ is not in the flow-up.  Next, when $\mc P^{s_{i}v'}(v) \ne 0$ since $\ell(s_{i}v') = \ell(v)$ it must be that $s_{i}v' = v$. This leads to a contradiction.  The hypothesis on $v'$ is that $s_{i}v' \lessdot v'$, but $s_{i}v' = v$ and $\ell_{\gamma}(v) = \ell(v) -1$.  This means at vertex $v$ in $\Gamma_{\h_{\gamma}}$ there are two edges deleted from the $\Gamma_{\h_{\gamma}}$, \textit{i.e.} $v^{-1}$ maps two roots to $-\gamma$, a contradiction since $v$ is invertible. 

Therefore, $\ga_{i}\mc P(v) = -s_{i}\mc P^{s_{i}w}(s_{i}v)$, and the inductive hypothesis shows $\mc P^{s_{i}w}(s_{i}v)$ is the product $s_{s_{i}\ga}s_{i}\mu = s_{i}s_{\ga}\mu$ times the product of the roots in $N_{s_{i}v}^{\gamma} = \{\ga_{i}\} \cup s_{i}N_{v}^{\gamma}$ except $s_{i}\ga$. Equivalently $\mc P(v)$ is the product of $s_{\ga}\mu$ and the roots in $N_{v}^{\gamma}$ except $\ga$, which is $q$ as desired.
\end{proof}

This will prove no matter how you arrive at $w$ the class $\mc P$ is uniquely determined.

\begin{corollary}\label{equal}  The flow-up classes defined by Definition \ref{DDH} are unique, \textit{i.e.} if $sv = w = tu$ where $s,t$ are simple reflections, then $\partial_{s}\mc P^{sv} = \mc P =  \partial_{t}\mc P^{tu}$.
\end{corollary}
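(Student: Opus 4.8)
The plan is to show that the two classes $\mc P_{1}:=\partial_{s}\mc P^{sv}$ and $\mc P_{2}:=\partial_{t}\mc P^{tu}$ agree by first observing that they are forced to coincide at $w$ and at every cover of $w$, and then propagating this agreement through all of $\Gamma_{\h_{\gamma}}$. By their construction in Definition \ref{DDH} and Theorem \ref{action}, both $\mc P_{1}$ and $\mc P_{2}$ are flow-up classes at $w$; in particular each lies in $H^{\ell_{\gamma}(w)}_{T}(\h_{\gamma})$, each takes the value $\prod_{\gb\in N^{\gamma}_{w}}\gb$ at $w$, and each vanishes outside the flow-up $\{y:y\ge_{\gamma}w\}$. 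Now I would apply Proposition \ref{p at v} to each $\mc P_{i}$: for every cover $v'$ of $w$ it computes $\mc P_{i}(v')$ by the formula (\ref{v val}), whose right-hand side is assembled solely from $w$, $v'$, the root $\ga$ with $v'=s_{\ga}w$, the set $N^{\gamma}_{v'}$, and the possibly-absent root $\mu$ --- no part of it depends on whether the recursion reaching $w$ used $s$ or $t$. Hence $\mc P_{1}$ and $\mc P_{2}$ agree at $w$ and at every cover of $w$.

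It then remains to show that $\mc D:=\mc P_{1}-\mc P_{2}$ is identically zero. We know $\mc D\in H^{\ell_{\gamma}(w)}_{T}(\h_{\gamma})$, that $\mc D$ is supported on $\{y:y>_{\gamma}w\}$ (strictly, since $\mc D(w)=0$), and that $\mc D$ vanishes at every cover of $w$. Since $\Gamma_{\h_{\gamma}}$ is acyclic (its edges strictly raise $\ell$), $<_{\gamma}$ is a strict partial order, so if $\mc D\neq0$ I may choose $z$ minimal for $<_{\gamma}$ in the support of $\mc D$; then $z>_{\gamma}w$ and $\ell(z)>\ell(w)$. For every edge $x\ed{}z$ of $\Gamma_{\h_{\gamma}}$, say $z=s_{\gb}x$, minimality forces $\mc D(x)=0$, whence the GKM condition $\mc D(x)-\mc D(z)\in\langle\gb\rangle$ gives $\mc D(z)\in\langle\gb\rangle$. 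Running over the $\ord{N^{\gamma}_{z}}=\ell_{\gamma}(z)$ pairwise non-proportional roots $\gb$ that label the edges into $z$, and using that $\R[\Delta]$ is a UFD in which each root is irreducible, I conclude that $\prod_{\gb\in N^{\gamma}_{z}}\gb$ divides the nonzero homogeneous polynomial $\mc D(z)$ of degree $\ell_{\gamma}(w)$, so $\ell_{\gamma}(z)\le\ell_{\gamma}(w)$.

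To finish I would combine this with a length count. Along any edge $x\ed{}y$ of $\Gamma_{\h_{\gamma}}$ one has $\ell(y)>\ell(x)$, and at most one edge at $y$ is deleted in $\Gamma_{\h_{\gamma}}$ (since $y^{-1}$ sends at most one positive root to $-\gamma$), so $\ell_{\gamma}(y)\ge\ell(y)-1\ge\ell(x)\ge\ell_{\gamma}(x)$; thus $\ell_{\gamma}$ is non-decreasing along flow-up paths and $z>_{\gamma}w$ gives $\ell_{\gamma}(z)\ge\ell_{\gamma}(w)$. Hence $\ell_{\gamma}(z)=\ell_{\gamma}(w)$, and then $\ell(z)-1\le\ell_{\gamma}(z)=\ell_{\gamma}(w)\le\ell(w)$ together with $\ell(z)>\ell(w)$ force $\ell(z)=\ell(w)+1$; since each edge of a flow-up path raises $\ell$ by at least one, the path from $w$ to $z$ is a single edge and $z$ is a cover of $w$. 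But $\mc D$ vanishes at every cover of $w$, contradicting $\mc D(z)\neq0$. Therefore $\mc D=0$, that is, $\partial_{s}\mc P^{sv}=\partial_{t}\mc P^{tu}$, which is the asserted uniqueness.

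I expect the real content to sit entirely in the first paragraph: the crux is Proposition \ref{p at v}, i.e.\ that the cover-values of a class produced by Definition \ref{DDH} are intrinsic to $w$ and do not remember which simple reflection drove the recursion, and that proposition already carries the delicate case analysis. Everything after that is a self-contained propagation argument whose only non-formal ingredient is the bookkeeping between $\ell$ and $\ell_{\gamma}$ --- the two-sided estimate $\ell(y)-1\le\ell_{\gamma}(y)\le\ell(y)$ and the monotonicity of $\ell_{\gamma}$ along $\Gamma_{\h_{\gamma}}$ --- which is where I would be most careful.
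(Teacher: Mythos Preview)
Your proof is correct and shares the paper's key step: both arguments rest on Proposition~\ref{p at v}, which says the value of a class produced by Definition~\ref{DDH} at any cover of $w$ is intrinsic to $w$ and the cover, independent of which simple reflection was used in the recursion. Where you diverge is in the propagation step. The paper expands $\mc P'$ in the flow-up basis (invoking Theorem~\ref{Geo} and the inductive hypothesis), restricts the sum by degree to indices $x>_{\gamma}w$ with $\ell_{\gamma}(x)\le\ell_{\gamma}(w)$, and then evaluates at each such $x$ to kill the coefficients---implicitly using that every such $x$ is a cover of $w$ so that Proposition~\ref{p at v} applies. You instead run a direct minimality argument on the support of $\mc D$, using the GKM divisibility condition together with the bookkeeping $\ell(y)-1\le\ell_{\gamma}(y)\le\ell(y)$ and the monotonicity of $\ell_{\gamma}$ along $\Gamma_{\h_{\gamma}}$ to force any minimal support point to be a cover of $w$. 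Your route is a bit longer but more self-contained: it avoids appealing to the existence of a flow-up basis and makes explicit the reduction to covers (essentially Lemma~\ref{gamma len}) that the paper leaves tacit. Either way the content lies exactly where you identified it, in Proposition~\ref{p at v}.
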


\begin{proof} Let $\partial_{s}\mc P^{sv} = \mc P$ and $\partial_{t}\mc P^{su} = \mc P'$.  We want to show $\mc P = \mc P'$.  Since $\mc P'$ is non-zero only on $x >_{\gamma} w$ and homogeneous of degree $\ell_{\gamma}(w)$ we have a $\R[\Delta]$-linear combination
\begin{displaymath}\mc P' = \sum_{\tiny{\begin{array}{c} x >_{\gamma} w\\ \ell_{\gamma}(x) \le \ell_{\gamma}(w)\end{array}}} f_{x}\mc P^{x} + f_{w}\mc P.\end{displaymath}
Evaluating both sides of this expression at $w$ we have $\mc P'(w) = f_{w}\mc P(w)$, but $\mc P'(w) = \mc P(w)$ which determines that $f_{w} = 1$.  Next, evaluation at any $x >_{\gamma} w$ in the summation gives
\begin{displaymath}\mc P'(x) = f_{x}\mc P^{x}(x) + \mc P(x).\end{displaymath}
Since $\mc P^{x}(x) \ne 0$ and $\mc P'(x) = \mc P(x)$ by Proposition \ref{p at v} we conclude all the $f_{x} = 0$.  Therefore $\mc P' = \mc P$.
\end{proof} 
   
As a consequence we can define a unique class $\mc P^{s_{i}w} := \partial_{i}^{\gamma}\mc P^{w}$, and by induction this proves the first half of Theorem \ref{BIG}; that is there exists a family of flow-up classes $\{\mc P^{w}\}_{W}$ and divided difference operators $\partial_{i}^{\gamma}$.  Next, we prove the second half, that is if $w = s_{i_{1}}\cdots s_{i_{n}}$ is a reduced expression, then $\partial_{w} = \partial_{i_{1}}\cdots \partial_{i_{n}}$ is independent of the reduced expression.

\begin{theorem}  If $w \in W$ and $w = s_{i_{1}}\cdots s_{i_{n}}$ a reduced expression, then $\partial_{w} := \partial_{i_{1}}\cdots \partial_{i_{n}}$ is independent of the reduced expression, that is if $w = s_{j_{1}}\cdots s_{j_{n}}$, then 
\[\partial_{i_{1}}\cdots \partial_{i_{n}} = \partial_{j_{i}}\cdots \partial_{\j_{n}}.\]
\end{theorem}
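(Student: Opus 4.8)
The plan is to sidestep a direct verification of the braid relations for the operators $\partial_i^\gamma$---which the recursive Definition \ref{DDH} would make unpleasant---and instead to compute both composites on a basis. By Theorem \ref{Geo} (and the discussion after Definition \ref{flow}, or \cite{GuZa-E}) the family $\{\mc P^{u}\}_{u\in W}$ produced in the first part of Theorem \ref{BIG} is an $\R[\Delta]$-basis of $\HG$, and each $\partial_i^\gamma$ is an $\R[\Delta]$-module homomorphism, so it suffices to show that $\partial_{i_1}^\gamma\cdots\partial_{i_n}^\gamma$ and $\partial_{j_1}^\gamma\cdots\partial_{j_n}^\gamma$ agree on every $\mc P^{u}$.

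Fix $u\in W$ and evaluate $\partial_{i_1}^\gamma\cdots\partial_{i_n}^\gamma\mc P^{u}$ by applying the operators one at a time, starting from the right. Put $u^{(n)}=u$ and $u^{(k-1)}=s_{i_k}u^{(k)}$ for $k=n,n-1,\dots,1$, so that $u^{(0)}=wu$. Left multiplication by a simple reflection changes length by exactly $\pm1$, and the first half of Theorem \ref{BIG} gives $\partial_{i_k}^\gamma\mc P^{u^{(k)}}=\mc P^{u^{(k-1)}}$ when $\ell(u^{(k-1)})<\ell(u^{(k)})$ and $\partial_{i_k}^\gamma\mc P^{u^{(k)}}=0$ when $\ell(u^{(k-1)})>\ell(u^{(k)})$. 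Running this down from $k=n$ to $k=1$, the composite equals $\mc P^{wu}$ if $\ell(u^{(k-1)})<\ell(u^{(k)})$ for every $k$, and it equals $0$ as soon as one of these inequalities reverses.

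Thus everything reduces to a standard combinatorial fact, visibly independent of the chosen reduced word: the chain $\ell(u^{(0)})<\ell(u^{(1)})<\dots<\ell(u^{(n)})$ holds if and only if $\ell(wu)=\ell(u)-\ell(w)$. In one direction, a chain of $n$ unit drops forces $\ell(wu)=\ell(u)-n=\ell(u)-\ell(w)$. Conversely, $\ell(wu)=\ell(u)-\ell(w)$ is exactly the statement that $\ell(w^{-1}\cdot(wu))=\ell(w^{-1})+\ell(wu)$, so concatenating the reduced word $s_{i_n}\cdots s_{i_1}$ for $w^{-1}$ with a reduced word for $wu$ gives a reduced word for $u$; stripping its initial letters one by one (prefixes of reduced words are reduced) yields $\ell(u^{(k)})=\ell(u)-(n-k)$ for all $k$, so the chain holds. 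Since $\ell(wu)\ge\ell(u)-\ell(w)$ always, the only remaining possibility is $\ell(wu)>\ell(u)-\ell(w)$, in which case the chain fails somewhere and the composite kills $\mc P^{u}$. In every case $\partial_{i_1}^\gamma\cdots\partial_{i_n}^\gamma\mc P^{u}$ equals $\mc P^{wu}$ or $0$ according to a criterion that depends only on $w$ and $u$; the same computation with the reduced word $s_{j_1}\cdots s_{j_n}$ produces the identical value on each $\mc P^{u}$, and $\R[\Delta]$-linearity finishes the proof.

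I expect no real obstacle once the first half of Theorem \ref{BIG} is available: the content is entirely absorbed into the elementary Coxeter-group lemma above (equivalently, the subword property of the Bruhat order). The alternative, classical route---checking $(\partial_i^\gamma)^2=0$ and the braid relations $\partial_i^\gamma\partial_j^\gamma\partial_i^\gamma=\partial_j^\gamma\partial_i^\gamma\partial_j^\gamma$ (and their longer analogues) directly from the formula in Definition \ref{DDH}---would be the genuinely hard path, because the Cartan-integer correction term $c_{\ga\ga_i}(\mc P^{v}-\mc P^{s_{i}v})$ forces a case analysis; evaluating on the flow-up basis avoids it completely.
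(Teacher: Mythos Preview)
Your proof is correct and shares its core with the paper's: both arguments evaluate the composite operators on the flow-up basis $\{\mc P^{u}\}_{u\in W}$ and use the first half of Theorem \ref{BIG} to see that the result is $\mc P^{wu}$ when $\ell(wu)=\ell(u)-\ell(w)$ and $0$ otherwise, a criterion independent of the reduced word. The paper's sketch wraps this same computation in a preliminary Matsumoto reduction to braid relations (and a closing appeal to Corollary \ref{equal}), whereas you apply the length-drop argument directly to an arbitrary reduced expression; your route is slightly more economical, but the substance is the same.
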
  

\begin{proof}[Sketch] Since any two expressions for $w \in W$ can obtained by a sequence of braid relations \cite[Theorem 1.9]{Hu-RG} it suffices to check if the Theorem is true for the braid relations.  Therefore, suppose that $v = stst\cdots = tsts\cdots$ and let $u$ and $u'$ be suffixes of $v$,  \textit{i.e.}  $s u = v = tu'$ such that $\ell(u) = \ell(v) - 1 = \ell(u')$.  Then, $\partial_{u}$ and $\partial_{u'}$ are well-defined since they have unique expressions in terms of the simple reflections.  To show $\partial_{v}$ is well-defined it suffices to show $\partial_{s}\partial_{u} = \partial_{t}\partial_{u'}$ by acting on the basis $\left \{ \mc P^{w}\right\}_{w \in W}$.

Now, we need only check the $x \in W$ such that $\ell(vx) = \ell(x) - \ell(v)$ or else by induction with Definiton \ref{DDH} 
\begin{displaymath}\partial_{s}\partial_{u}\mc P^{x} = 0 = \partial_{t}\partial_{u'}\mc P^{x}.\end{displaymath}  
In this case, we have $\ell(ux) = \ell(x) - \ell(u)$ and $\partial_{u}\mc P^{x} = \mc P^{ux}$ (respectively for $u'$).  The product $vx$ is well-defined, so conclude $sux = vx < ux$ if and only if $tu'x = vx < u'x$.  An application of Corollary \ref{equal} proves $\partial_{s}\partial_{u}\mc P^{x} = \partial_{s}\mc P^{ux}  = \mc P^{vx} = \partial_{t}\mc P^{u'x} = \partial_{t}\partial_{u'}\mc P^{x}$.     
\end{proof}

\subsection{Future work}

This work provides a model construction of divded difference operators and flow-up classes for all the GKM rings $H^{*}_{T}(\h)$.  A difficulty which needs to be overcome before we can obtain the equivalent of Theorem \ref{action} we need a better understanding of flow-up classes then Proposition \ref{v flow} provides.  Namely, here we take advantage that covers of $w$ essential determine $\mc P^{w}$.  In general, we will need to understand flow-up classes further up the flow of $w$ then just at the covers. 

An advantage of this approach is that it does determine the representation on $H^{*}_{T}(\h)$ when $\Phi$ is simply-laced, \text{i.e.} all the Cartan integers $c_{\ga \gb} = \pm 1$.  This next result will appear in \cite{Te-HRDD}.

\begin{theorem}\label{confirm} Suppose $\Phi$ is simply-laced.  Let $m_{V} = \frac{\ord{W}}{\ord{\Phi}}$ and $m_{\R} = \ord{W} - \ord{\Delta}m_{V}$, then as a $W$-module
\begin{displaymath}
 H^{*}_{T}(\h) = (V^{\oplus m_{V}} \bigoplus \R^{\oplus m_{\R}}) \; \bigotimes_{\R}  \;\R[{\Delta}],
\end{displaymath}
where $V$ is the reflection representation (cf. Section \ref{hgraph}) , $\R$ is the trivial representation and $\R[\Delta]$ is the polynomial representation of $W$. 
\end{theorem}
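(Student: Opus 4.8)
Throughout $\h = \h_{\gamma}$ is the highest root set, for which the operators of Theorem \ref{BIG} are available, and $\R[\Delta]_{+} \subset \R[\Delta]$ denotes the (graded, $W$-stable) ideal of polynomials with zero constant term. The plan is to pass to the ``ordinary cohomology'' $\overline{H} := \HG \otimes_{\R[\Delta]} \R$, where $\R = \R[\Delta]/\R[\Delta]_{+}$ carries the trivial $W$-action. Since $\HG$ is a free $\R[\Delta]$-module (Theorem \ref{Geo}) on which $W$ acts compatibly with its action on $\R[\Delta]$, a graded equivariant Nakayama argument --- lifting, degree by degree from the bottom, a homogeneous $W$-stable complement of $\R[\Delta]_{+}\HG$ and using semisimplicity of $\R[W]$ in characteristic zero --- yields a \emph{graded} $W$-equivariant isomorphism $\HG \cong \overline{H} \otimes_{\R} \R[\Delta]$. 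Hence the theorem is equivalent to the statement that $\overline{H} \cong V^{\oplus m_{V}} \oplus \R^{\oplus m_{\R}}$ as a $W$-module. One knows $\overline{H}$ has the homogeneous $\R$-basis $\{\overline{\mc P^{w}}\}_{w \in W}$ of reductions of flow-up classes, with $\overline{\mc P^{w}}$ in degree $\ell_{\gamma}(w)$; in particular $\dim_{\R}\overline{H} = |W|$ and its graded dimensions are the coefficients of the de Mari--Procesi--Shayman Poincar\'e polynomial $\sum_{w} t^{\ell_{\gamma}(w)}$.

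The next step is to read the $W$-action on $\overline{H}$ off of Theorem \ref{action}. Reducing those formulas modulo $\R[\Delta]_{+}$ kills the term $\ga_{i}\mc P$, leaving, for each simple reflection $s_{i}$: if $s_{i}w > w$ and the Bruhat edge at $w$ survives in $\Gamma_{\h_{\gamma}}$, then $s_{i} \cdot \overline{\mc P^{w}} = \overline{\mc P^{w}}$; if the edge $s_{i}w \leftrightarrow w$ is deleted, then $s_{i}$ interchanges $\overline{\mc P^{w}}$ and $\overline{\mc P^{s_{i}w}}$; and if $s_{i}w < w$ with the edge present, then $s_{i} \cdot \overline{\mc P^{w}} = \overline{\mc P^{w}} + c_{\ga \ga_{i}}\big(\overline{\mc P^{v}} - \overline{\mc P^{s_{i}v}}\big)$, where $v$ is the unique cover of $w$ of Corollary \ref{unique} and $c_{\ga \ga_{i}}$ the Cartan integer of Definition \ref{DDH}. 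The point is that when $\Phi$ is simply-laced each such Cartan integer is $0$ or $\pm 1$, so the action of every $s_{i}$ on the basis $\{\overline{\mc P^{w}}\}$ is a $\{0,\pm 1\}$-matrix --- a permutation ($2$-cycles from deleted edges, fixed points from ascents) deformed by a unipotent correction supported on $2$-cycles --- and the whole action is block-diagonal for the grading by $\ell_{\gamma}$.

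It remains to decompose this explicit combinatorial module. The span of the degree-one classes $\overline{\mc P^{s_{j}}}$, $\ga_{j} \in \Delta$, is $W$-stable and, by the formulas above, carries a copy of the reflection representation $V$ (the transition matrices are those of $W$ acting on $\Delta$); the classes $\overline{\mc P^{e}}$ (degree $0$) and $\overline{\mc P^{w_{\circ}}}$ (top degree, where no cover $v$ exists so each $s_{i}$ fixes it) are trivial. The crux is to show that \emph{no composition factor of $\overline{H}$ besides $V$ and $\R$ occurs}, and that $V$ occurs with multiplicity exactly $m_{V}$. I would do this by computing the $W$-character of $\overline{H}$: being block-graded and on each block a deformed permutation matrix, its trace at $w \in W$ is a signed count over the elements of $W$, and one verifies the identity $\chi_{\overline{H}} = m_{V}\,\chi_{V} + m_{\R}\,\chi_{\mathrm{triv}}$ on conjugacy-class representatives; evaluation at the identity and at a simple reflection then forces $m_{V} = |W|/|\Phi|$ and $m_{\R} = |W| - |\Delta|\,m_{V}$.

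The main obstacle is precisely this vanishing of all ``exotic'' composition factors: a priori a simply-laced Weyl group has many irreducibles, and the divided-difference recursion gives no formal reason to steer clear of them. The argument must exploit --- uniformly in the type --- the defining feature of the highest root, namely that at most one Bruhat edge ending at any vertex $w$ is deleted in $\Gamma_{\h_{\gamma}}$ and the deleted edges are governed by the single condition $w^{-1}\ga = -\gamma$; this is what lets the correction terms $c_{\ga \ga_{i}}(\mc P^{v} - \mc P^{s_{i}v})$ be grouped, via the Diamond Lemma \ref{diamond} and Corollary \ref{unique}, into short $W$-orbits carrying only copies of $V$ and $\R$. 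The simply-laced hypothesis is indispensable here: it is exactly what forces those Cartan integers to be $\pm 1$, so that the module structure of $\overline{H}$ is governed by Bruhat combinatorics alone and not by finer data about root lengths.
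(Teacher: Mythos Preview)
The paper does not prove Theorem \ref{confirm}: it appears in the ``Future work'' subsection with the explicit remark that ``This next result will appear in \cite{Te-HRDD}.'' So there is no proof in the paper to compare your sketch against. The only hint the paper gives is the sentence immediately preceding the theorem --- that the explicit action formulas of Theorem \ref{action} ``determine the representation on $H^{*}_{T}(\h)$ when $\Phi$ is simply-laced'' --- and your approach follows exactly that hint: reduce modulo $\R[\Delta]_{+}$, read the $s_{i}$-action on the basis $\{\overline{\mc P^{w}}\}$ from Theorem \ref{action}, and use that in the simply-laced case the Cartan integers occurring there are $0,\pm 1$.

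That said, your sketch has a genuine gap, which you yourself name: the assertion that \emph{only} $V$ and the trivial representation occur in $\overline{H}$ is never established. Saying you ``would compute the $W$-character of $\overline{H}$'' and ``verify the identity on conjugacy-class representatives'' is not a proof; Theorem \ref{action} only records the action of simple reflections, and extracting the trace of an arbitrary $w$ from those recursions is not automatic. Evaluating at $e$ and at one simple reflection fixes only two linear constraints on the multiplicity vector, which for a general Weyl group is far from enough. Everything else in your outline (the equivariant Nakayama step giving $\HG \cong \overline{H}\otimes_{\R}\R[\Delta]$, the identification of the degree-$0$ and top-degree pieces as trivial, the degree-$1$ piece as $V$) is standard and correct, but that remaining step is the entire content of the theorem.
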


In the case where $\Phi$ is the type $A$ root system we have

\begin{theorem} If $\Phi$ is the type $A_{n-1}$ root system, then as a $\mf S_{n}$-module

\begin{displaymath}
 H^{*}_{T}(\h) = (V^{\oplus (n-2)!} \bigoplus \R^{\oplus (n-1)!(n-1)})  \; \bigotimes_{\R} \;\R[{\Delta}].
\end{displaymath}
Furthermore, this proves the Shareshian-Wachs conjecture \cite[Conjecture 5.3]{ShWa-CQH}.
\end{theorem}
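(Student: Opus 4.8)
The plan is to derive this from Theorem~\ref{confirm}, since the type $A_{n-1}$ root system is simply-laced, and then to rewrite the resulting $\mf{S}_{n}$-module in the language of symmetric functions. First I would substitute the numerology of type $A_{n-1}$ into Theorem~\ref{confirm}: here $\ord{W} = \ord{\mf{S}_{n}} = n!$, $\ord{\Phi} = n(n-1)$ (the roots $\mathtt{t_{i}} - \mathtt{t_{j}}$ with $i \ne j$), and $\ord{\Delta} = n-1$, so $m_{V} = n!/(n(n-1)) = (n-2)!$ and $m_{\R} = n! - (n-1)(n-2)! = (n-1)!\,(n-1)$. Substituting these into Theorem~\ref{confirm} yields the claimed decomposition of $\HG$; as a check, the total $\R[\Delta]$-rank is $(n-1)(n-2)! + (n-1)!\,(n-1) = n!$, the number of $T$-fixed points of the regular semisimple Hessenberg variety $\mc{H}_{\gamma}$.

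To reach the Shareshian--Wachs statement I would first pass to ordinary cohomology. By \cite{Te-HRY} the ring $\HG$ is the $T$-equivariant cohomology of $\mc{H}_{\gamma}$, and by Theorem~\ref{Geo} it is a free $\R[\Delta]$-module, so $H^{*}(\mc{H}_{\gamma}) = \HG \otimes_{\R[\Delta]} \R \iso V^{\oplus (n-2)!} \oplus \R^{\oplus (n-1)!\,(n-1)}$ as $\mf{S}_{n}$-modules. Because the flow-up basis $\{\mc P^{w}\}_{w \in W}$ built in Section~\ref{root} is homogeneous with $\deg \mc P^{w} = \ell_{\gamma}(w)$, I would use it to record the grading: choosing module generators inside the $V$- and $\R$-summands realizes $H^{2j}(\mc{H}_{\gamma})$ as the degree-$j$ part, giving a well-defined graded Frobenius characteristic $\sum_{j} \mathrm{ch}\, H^{2j}(\mc{H}_{\gamma})\, t^{j}$.

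Next I would identify the combinatorial side. The highest root of $A_{n-1}$ is $\gamma = \mathtt{t_{1}} - \mathtt{t_{n}}$, so $\h_{\gamma} = \Phi^{+} \setminus \{\mathtt{t_{1}} - \mathtt{t_{n}}\}$ corresponds to the Hessenberg function $m = (n-1,n,n,\dots,n)$, whose natural unit interval graph $G$ is $K_{n}$ with the edge $\{1,n\}$ removed. Since $\{1,n\}$ is the only pair of vertices of $G$ permitted to share a color, a direct enumeration of proper colorings gives $X_{G}(\mathbf{x}) = n!\, m_{(1^{n})} + (n-2)!\, m_{(2,1^{n-2})}$; writing $m_{(1^{n})} = e_{n}$ and $m_{(2,1^{n-2})} = e_{n-1}e_{1} - n\, e_{n}$, applying $\omega$, and using Pieri ($\omega e_{n} = s_{(n)}$ and $\omega(e_{n-1}e_{1}) = s_{(n)} + s_{(n-1,1)}$) gives $\omega X_{G}(\mathbf{x}) = (n-1)!\,(n-1)\, s_{(n)} + (n-2)!\, s_{(n-1,1)}$. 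Since $\mathrm{ch}\, V = s_{(n-1,1)}$ and $\mathrm{ch}\,\R = s_{(n)}$, this is exactly $\mathrm{ch}\, H^{*}(\mc{H}_{\gamma})$; repeating the count with the ascent statistic on $G$ refines the identity to $\sum_{j} \mathrm{ch}\, H^{2j}(\mc{H}_{\gamma})\, t^{j} = \omega X_{G}(\mathbf{x},t)$, which is the content of \cite[Conjecture~5.3]{ShWa-CQH}.

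The hard part will not be this bookkeeping but the input Theorem~\ref{confirm}: pinning down the multiplicities $m_{V}$ and $m_{\R}$ of the $W$-representation rests on knowing exactly how $s_{i}\cdot$ permutes and recombines the flow-up classes $\mc P^{w}$, that is on the case analysis behind Theorem~\ref{action} carried out in \cite{Te-HRDD}. The one genuinely new ingredient for the type $A$ statement is the \emph{graded} matching: one has to check that the copies of $V$ and $\R$ cut out of the flow-up basis sit in the degrees predicted by the ascent-weighted $X_{G}(\mathbf{x},t)$, and here palindromicity of $X_{G}(\mathbf{x},t)$ for unit interval graphs, consistent with Poincar\'e duality on $\mc{H}_{\gamma}$, is a useful sanity check.
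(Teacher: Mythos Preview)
Your approach is correct and matches the paper's: the theorem is stated in the paper immediately after Theorem~\ref{confirm} with no separate proof, so the paper's implicit argument is exactly the specialization of Theorem~\ref{confirm} to type~$A_{n-1}$ that you carry out in your first paragraph. Your subsequent symmetric-function bookkeeping (identifying $\h_{\gamma}$ with $K_{n}$ minus the edge $\{1,n\}$, computing $X_{G}$, and matching $\omega X_{G}$ with $\mathrm{ch}\,H^{*}(\mc H_{\gamma})$) is correct and supplies detail for the Shareshian--Wachs claim that the paper, being an extended abstract, omits entirely.
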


\bibliographystyle{plain}   
\bibliography{bib}

\end{document}